\newcommand{\R}{\mathbb{R}}
\newcommand{\T}{\mathbb{T}}
\renewcommand{\geq}{\geqslant}
\renewcommand{\leq}{\leqslant}
\renewcommand{\>}{\rangle}
\newcommand{\norm}[1]{\left\Vert#1\right\Vert}
\newcommand{\lip}{{\mathrm{lip}}_0}
\newcommand{\Lip}{{\mathrm{Lip}}_0}
\newcommand{\NA}{\operatorname{NA}}
\newcommand{\co}{\operatorname{co}}
\newcommand{\cconv}{\overline{\co}}
\newtheorem{thm}{Theorem}[section]
\newtheorem{prop}[thm]{Proposition}
\newtheorem{coro}[thm]{Corollary}
\newtheorem{lema}[thm]{Lemma}
\theoremstyle{definition}
\newtheorem{ejem}[thm]{Example}
\newtheorem{rema}[thm]{Remark}
\numberwithin{equation}{section}
\def\fnote#1{\footnote}
\def\natu{{\mathbb N}}
\def\ignora#1{}
\def\n3#1{\left\vert  \! \left\vert \! \left\vert \, #1 \, \right\vert \!
  \right\vert \! \right\vert }
\newcommand{\iten}{\ensuremath{\widehat{\otimes}_\varepsilon}}
\newcommand{\pten}{\ensuremath{\widehat{\otimes}_\pi}}
\newcommand{\ptensor}{\widehat{\otimes}_{\pi}}
\newcommand{\lipC}{{\mathrm{lip}}_\tau}
\newcommand{\renorm}[1]{\left\lvert\!\left\lvert\!\left\lvert #1 \right\rvert\!\right\rvert\!\right\rvert}
\begin{document}

\title[]{Projective tensor products where every element is norm-attaining}

\author[L. Garc\'ia-Lirola]{Luis C. Garc\'ia-Lirola }\address{Departamento de Matemáticas, Universidad de Zaragoza, 50009, Zaragoza, Spain} 
\email{\texttt{luiscarlos@unizar.es}}

\author{ Juan Guerrero-Viu }
\address{Departamento de Matemáticas, Universidad de Zaragoza, 50009, Zaragoza, Spain} 
\email{815649@unizar.es}

\author{ Abraham Rueda Zoca }\address{Universidad de Granada, Facultad de Ciencias. Departamento de An\'{a}lisis Matem\'{a}tico, 18071-Granada
(Spain)} \email{ abrahamrueda@ugr.es}
\urladdr{\url{https://arzenglish.wordpress.com}}

\keywords{Banach spaces; Projective tensor product; projective norm-attainment; norm-attaining operators.}

\subjclass{46B04, 46B20, 46B28}

\begin{abstract}
In this paper we analyse when every element of $X\pten Y$ attains its projective norm. We prove that this is the case if $X$ is the dual of a subspace of a predual of an $\ell_1(I)$ space and $Y$ is $1$-complemented in its bidual under approximation properties assumptions. This result allows us to provide some new examples where $X$ is a Lipschitz-free space.  We also prove that the set of norm-attaining elements is dense in $X\pten Y$ if, for instance, $X=L_1(\mu)$ and $Y$ is any Banach space, or if $X$ has the metric $\pi$-property and $Y$ is a dual space with the RNP.  
\end{abstract}

\maketitle

\markboth{Luis C. Garc\'ia-Lirola, Juan Guerrero-Viu and Abraham Rueda Zoca}{Projective tensor products where every element is norm-attaining}

\section{Introduction}

The study of norm-attaining functions has produced a vast literature in Banach space theory during the last 60 years in many different contexts (e.g. for bounded linear operators \cite{bourgain,lindenstrauss,martin16}, for bounded bilinear mappings \cite{acagpa,Choi} or for Lipschitz functions spaces \cite{CGMR,Godefroy, kms}). 

Very recently, motivated by finding a suitable notion of norm-attainment on nuclear operators, a notion of norm-attainment for elements in a projective tensor product of two Banach spaces was introduced in \cite{DJRRZ} (see Subsection~\ref{subsec:normattaintensor} for background). Namely, given $z\in X\pten Y$, it is said that $z$ \textit{attains its projective norm} (that is, $z\in \NA_\pi(X, Y)$) if there exist sequences $(x_n)_n\subset X$, and $(y_n)_n\subset Y$ such that
\[z=\sum_{n=1}^\infty x_n\otimes y_n,\quad \text{and} \quad \norm{z}_{\pi}=\sum_{n=1}^\infty \norm{x_n}\norm{y_n}.\]
After that, new results have appeared in the literature in \cite{DGLJRZ,rueda23}. We collect here the main results in this line:
\begin{enumerate}
    \item $\NA_\pi(X\pten Y)=X\pten Y$ if $X$ and $Y$ are finite-dimensional \cite[Proposition 3.5]{DJRRZ}, $X=\ell_1(I)$ for some set $I$ and $Y$ is any space \cite[Proposition 3.6]{DJRRZ}, if $X=Y$ is a complex Hilbert space 
    (based on a diagonalization argument) \cite[Proposition 3.8]{DJRRZ} and if $X$ is a finite-dimensional polyhedral Banach space and $Y$ is any dual Banach space \cite[Theorem 4.1]{DGLJRZ}.
    \item $\NA_\pi(X\pten Y)$ is dense in $X\pten Y$ if $X$ has the metric $\pi$-property and $Y$ either is uniformly convex or it has the metric $\pi$-property \cite[Theorem 4.8]{DJRRZ}, if $X$ is a polyhedral Banach space with the metric $\pi$-property and $Y$ is a dual Banach space \cite[Theorem 4.2]{DGLJRZ} or if $X$ and $Y$ are dual spaces with the RNP and either $X^*$ or $Y^*$ has the approximation property \cite[Corollary 4.6]{DGLJRZ}.
    \item There are Banach spaces $X$ and $Y$ such that $\NA_\pi(X\pten Y)$ is not dense in $X\pten Y$ \cite[Theorem 5.1]{DJRRZ} (in particular, in such space there are non norm-attaining tensors which are expressible as finite sum of basic tensors).
    \item $X\pten Y\setminus \NA_\pi(X\pten Y)$ is dense if $X=c_0(I)$ and $Y$ is a Hilbert space \cite[Theorem 3.1]{rueda23}. In particular, it is possible that both $\NA_\pi(X\pten Y)$ and its complement are simultaneously dense in $X\pten Y$.
\end{enumerate}

After this list, it is clear that most of the existing results concern the density of $\NA_\pi(X\pten Y)$, but the phenomenon of when $\NA_\pi(X\pten Y)=X\pten Y$ (in other words, when every element of $X\pten Y$ attains its projective norm) is quite unknown. The main aim of the results of Section~\ref{section:todoelealcanza} is to shed light on this phenomenon. As the main result of this section, we prove in Theorem~\ref{teo:subspredl1attains} that if $X$ is a subspace of an $\ell_1$ predual (i.e. $X\subseteq Z$ for a Banach space $Z$ such that $X^*=\ell_1(I)$ holds isometrically) and $Y$ is dual Banach space, then $\NA_\pi(X^*\pten Y)=X^*\pten Y$ if either $X^*$ or $Y$ has the approximation property.

In the search of necessary conditions, we prove in Corollary~\ref{cor:propAnormatensor} that if $X$ is a separable Banach space such that $\NA_\pi(X^*\pten Y^*)=X^*\pten Y^*$ holds for every Banach space $Y$ then $B_{X^*}=\overline{\co}(\exp(B_{X^*}))$. For proving that we adapt a necessary condition for Lindentrauss property A exhibited in \cite[Theorem 2]{lindenstrauss}. Moreover, we prove in Theorem~\ref{theo:complebig} a criterion in order to get that the set of the elements that do not attain their projective norm is norming for the dual (and in particular this set is big), showing explicit examples where this criterion applies in Example~\ref{exam:fovelle}. In particular, we obtain examples of Banach spaces $X$ and $Y$ such that $\NA_\pi(X\pten Y)$ is dense in $X\pten Y$ and $B_{X\pten Y}=\overline{\co}(\{z\in S_{X\pten Y}: z\notin \NA_\pi(X\pten Y)\})$ (see Remark~\ref{remark:bignaandcomple}).

In Section~\ref{sect:densidad} we obtain new results on density of $\NA_\pi(X\pten Y)$. We prove in Corollary~\ref{cor:denseness} that $\NA_\pi(X\pten Y)$ is dense in $X\pten Y$ if $X=L_1(\mu)$ and $Y$ is any Banach space, if $X$ is an $L_1$-predual and $Y$ is $1$-complemented in $Y^{**}$ and if $X$ has the metric $\pi$-property and $Y$ is an RNP dual Banach space. 

\begin{rema} In \cite{DJRRZ}, the concept of \textit{nuclear operator which attains its nuclear norm} is considered, and a detailed treatment is done (see \cite[Section 2.3]{DJRRZ} for the (well-known) connection between nuclear operators and projective tensor products under approximation properties assumptions). All the results in the paper have a translation in the context of nuclear operators. However, we are not going to write them explicitly  
to avoid extra notation, duplicity of results, and  
technicalities which may make harder to understand the ideas behind the proofs.
\end{rema}

\section{Notation and preliminary results} 

For simplicity we will consider real Banach spaces. We denote by $B_X$ and $S_X$ the closed unit ball and the unit sphere, respectively, of the Banach space $X$. We denote by $\mathcal L(X, Y)$ the space of all bounded linear operators from $X$ into $Y$. If $Y = \mathbb R$, then $\mathcal L(X, \mathbb R)$ is denoted by $X^*$, the topological dual space of $X$. We denote $\mathcal F(X,Y)$ the space of finite-rank operators from $X$ into $Y$.

\subsection{Tensor product spaces}

The projective tensor product of $X$ and $Y$, denoted by $X \pten Y$, is the completion of the algebraic tensor product $X \otimes Y$ endowed with the norm
$$
\|z\|_{\pi} := \inf \left\{ \sum_{n=1}^k \|x_n\| \|y_n\|: z = \sum_{n=1}^k x_n \otimes y_n \right\},$$
where the infimum is taken over all such representations of $z$. The reason for taking completion is that $X\otimes Y$ endowed with the projective norm is complete if, and only if, either $X$ or $Y$ is finite dimensional (see \cite[P.43, Exercises 2.4 and 2.5]{ryan}).

It is well known that $\|x \otimes y\|_{\pi} = \|x\| \|y\|$ for every $x \in X$, $y \in Y$, and that the closed unit ball of $X \pten Y$ is the closed convex hull of the set $B_X \otimes B_Y = \{ x \otimes y: x \in B_X, y \in B_Y \}$. Throughout the paper, we will use of both facts without any explicit reference.

Observe that the action of an operator $G\colon X \longrightarrow Y^*$ as a linear functional on $X \pten Y$ is given by
$$
G \left( \sum_{n=1}^{k} x_n \otimes y_n \right) = \sum_{n=1}^{k} G(x_n)(y_n),$$
for every $\sum_{n=1}^{k} x_n \otimes y_n \in X \otimes Y$. This action establishes a linear isometry from $\mathcal L(X,Y^*)$ onto $(X\pten Y)^*$ (see e.g. \cite[Theorem 2.9]{ryan}). All along this paper we will use the isometric identification $(X\pten Y)^*= \mathcal L(X,Y^*)$ without any explicit mention.

Furthermore, given two bounded operators $T\colon X\longrightarrow Z$ and $S\colon Y\longrightarrow W$, we can define an operator $T\otimes S\colon X\pten Y\longrightarrow Z\pten W$ by the action $(T\otimes S)(x\otimes y):=T(x)\otimes S(y)$ for $x\in X$ and $y\in Y$. It follows that $\Vert T\otimes S\Vert=\Vert T\Vert\Vert S\Vert$. Moreover, it is known that if $T,S$ are bounded projections then so is $T\otimes S$. Consequently, if $Z\subseteq X$ is a $1$-complemented subspace, then $Z\pten Y$ is a $1$-complemented subspace of $X\pten Y$ in the natural way (see \cite[Proposition 2.4]{ryan} for details).

Recall that a Banach space $X$ has the \textit{metric approximation property} (MAP)
if there exists a net $(S_\alpha)_\alpha$ in $\mathcal F(X,X)$ with $\Vert S_\alpha\Vert\leq 1$ for every $\alpha$ and such that
$S_\alpha (x) \to x$ for all $x \in X$. In the above condition, if $S_\alpha$ is additionally a projection, we say that $X$ has the \textit{metric $\pi$-property}. Equivalently, $X$ has the metric $\pi$-property if given $\varepsilon>0$ and $\{x_1,\ldots, x_n\}\subseteq S_X$, we can find a finite dimensional 1-complemented subspace $M\subset X$ such that for each $i\in\{1,\ldots, n\}$ there exists $x_i'\in M$ with $\norm{x_i-x_i'}<\varepsilon$ (see \cite{DJRRZ}). 

Recall that given two Banach spaces $X$ and $Y$, the
\textit{injective tensor product} of $X$ and $Y$, denoted by
$X \iten Y$, is the completion of $X\otimes Y$ under the norm given by
\begin{equation*}
   \Vert u\Vert_{\varepsilon}:=\sup
   \left\{
      \sum_{i=1}^n \vert x^*(x_i)y^*(y_i)\vert
      : x^*\in S_{X^*}, y^*\in S_{Y^*}
   \right\},
\end{equation*}
where $u=\sum_{i=1}^n x_i\otimes y_i$ (see \cite[Chapter 3]{ryan} for background).
Every $u \in X \iten Y$ can be viewed as an operator $T_u : X^* \rightarrow Y$ which is weak$^*$-to-weakly continuous. Under this point of view, the norm on the injective tensor product is nothing but the operator norm.

As well as happen with the projective tensor product, given two bounded operators $T\colon X\longrightarrow Z$ and $S\colon Y\longrightarrow W$, we can define an operator $T\otimes S\colon X\iten Y\longrightarrow Z\iten W$ by the action $(T\otimes S)(x\otimes y):=T(x)\otimes S(y)$ for $x\in X$ and $y\in Y$. It follows that $\Vert T\otimes S\Vert=\Vert T\Vert\Vert S\Vert$. However, this time we get that if $T,S$ are linear isometries then $T\otimes S$ is also a linear isometry (c.f. e.g. \cite[Section 3.2]{ryan}). This fact is commonly known as ``the injective tensor product respects subspaces''.

It is known that, given two Banach spaces $X$ and $Y$, we have $(X\iten Y)^*=X^*\pten Y^*$ if either $X^*$ or $Y^*$ has the Radon-Nikodym Property (RNP) and either $X^*$ or $Y^*$ has the approximation property (AP) \cite[Theorem 5.33]{ryan}.

\subsection{Projective norm attainment}\label{subsec:normattaintensor}

One consequence of the isometric identification $\ell_1(I)\pten X=\ell_1(I,X)$ is \cite[Proposition 2.8]{ryan}, which establishes that, given two Banach spaces $X$ and $Y$, then for every $z\in X\pten Y$ and every $\varepsilon>0$, there exist sequences $(x_n)_n$ in $X$ and $(y_n)_n$ in $Y$ with $u=\sum_{n=1}^\infty x_n\otimes y_n$ (where the above convergence is in the norm topology of $X\pten Y$) and such that $\Vert z\Vert_{\pi}\leq \sum_{n=1}^\infty \Vert x_n\Vert\Vert y_n\Vert\leq \Vert z\Vert_{\pi}+\varepsilon$. Consequently, it follows that
$$\Vert z\Vert_{\pi}=\inf\left\{\sum_{n=1}^\infty \Vert x_n\Vert\Vert y_n\Vert: \sum_{n=1}^\infty \Vert x_n\Vert\Vert y_n\Vert<\infty,  u=\sum_{n=1}^\infty x_n\otimes y_n \right\}$$
where the infimum is taken over all of the possible representations of $z$ as limit of a series in the above form.

According to \cite[Definition 2.1]{DJRRZ}, an element $z\in X\pten Y$ is said to \textit{attain its projective norm} if the above infimum is actually a minimum, that is, if there exists a sequence $(x_n)_n$ in $X$ and $(y_n)_n$ in $Y$ such that $\Vert z\Vert=\sum_{n=1}^\infty \Vert x_n\Vert \Vert  y_n\Vert$ and $z=\sum_{n=1}^\infty x_n\otimes y_n$. We denote $\NA_\pi(X\pten Y)$ the set of those $z$ which attain their projective norm.
Observe that for all $z\in \NA_\pi(X\pten Y)$ we can write $z=\sum_{n=1}^\infty \lambda_n x_n\otimes y_n$, where $x_n\in S_X, y_n\in S_Y$ for every $n\in\mathbb N$ and $\lambda_n \in \mathbb R^+$ satisfy $\sum_{n=1}^\infty \lambda_n=\Vert z\Vert$. In such case, given any $T\in  \mathcal L(X,Y^*)=(X\pten Y)^*$ such that $T(z)=\Vert z\Vert$ and $\Vert T\Vert=1$, a convexity argument implies that $T(x_n)(y_n)=\norm{T(x_n)}=\norm{T}=1$ for every $n\in\mathbb N$. In particular, $T$ attains its norm (see \cite{lindenstrauss}). We denote by $\NA(X,Y)$ the set of those norm-attaining operators.

\subsection{Lipschitz functions and Lipschitz-free spaces}

Given a metric space $M$, $B(x,r)$ denotes the closed ball in $M$ centered at $x\in M$ with radius $r$. A metric space $M$ is said to be \textit{proper} if every closed ball of $M$ is compact.

We will denote by $\Lip(M,X)$ (or simply $\Lip(M)$ if $X=\R$) the space of all $X$-valued Lipschitz functions on $M$ which vanish at a designated origin $0\in M$. We will consider the norm in $\Lip(M,X)$ given by the best Lipschitz constant, denoted $\Vert \cdot\Vert_L$.
We denote $\delta$ the canonical isometric embedding of $M$ into $\Lip(M)^*$, 
which is given by $\<f,\delta(x)\> = f(x)$ for $x\in M$ and $f\in \Lip(M)$. The space $\mathcal F(M)$ is defined as $\mathcal F(M)=\overline{\operatorname{span}}\{\delta(x):x\in M\}$. It is well known that $\mathcal F(M)^*=\Lip(M)$ isometrically.

In Section~\ref{section:todoelealcanza} we will be interested in spaces $\mathcal F(M)$ which are indeed dual Banach spaces. We will collect here a pair of known results in the literature which will be of particular interest for our purposes. Let us introduce a bit more of notation.

A function $f\colon M\to \mathbb R$ is said to be \textit{locally flat} if $\lim_{x,y\to p}\frac{|f(x)-f(y)|}{d(x,y)} =0$ for every $p\in M$, and it is said to be \textit{flat at infinity} if $\lim_{r\to\infty} \norm{f|_{M\setminus B(0,r)}}=0$. The space of \textit{little Lipschitz functions} $\lip(M)$ is made up of those functions in $\Lip(M)$ which are both locally flat and flat at infinity. 

In case $M$ is compact, $\lip(M)$ coincides with the subspace of $\Lip(M)$ of those functions that are \textit{uniformly locally flat}, that is, 
\[ \lip(M)=\left\{f\colon M\to \mathbb R: \forall \varepsilon >0\, \exists \delta>0 : 0<d(x,y)\leq\delta \Rightarrow \frac{|f(x)-f(y)|}{d(x,y)}\leq \varepsilon \right\}.\] 
Also, given a topology $\tau$ in $M$, we denote
$$\lipC(M):=\{f\in\lip(M): f \mbox{ is }\tau\mbox{-continuous} \}.$$

Let us finally collect the promised results on the duality of Lipschitz-free spaces.
\begin{thm}\label{thm:dualityfree}
Let $M$ be a separable metric space.
\begin{enumerate}[a)]
    \item If $M$ is countable and proper then $\lip(M)^*=\mathcal F(M)$ isometrically \cite[Theorem 2.1]{Dalet05}, see also \cite[Theorem 3.2]{AGPP}.
    \item If $M$ is bounded and there exists a 
    compact topology $\tau$ on $M$  
    satisfying that, for every $\varepsilon>0$, there exists $f\in\lipC(M)$ with $\Vert f\Vert\leq 1$ and $f(x)-f(y)>d(x,y)-\varepsilon$, then $\lipC(M)^*=\mathcal F(M)$ isometrically \cite[Theorem~6.2]{Kalton04}.
\end{enumerate}
\end{thm}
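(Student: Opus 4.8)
The plan is to derive both assertions from the standard criterion identifying a Banach space as a dual space: if $Z$ is a closed subspace of $X^{*}$ which is $1$-norming for $X$ and such that $B_{X}$ is compact in the topology $\sigma(X,Z)$, then the canonical evaluation $X\to Z^{*}$ is an onto isometry, so $X=Z^{*}$ isometrically. Here one takes $X=\mathcal F(M)$, uses the identification $\mathcal F(M)^{*}=\Lip(M)$, and sets $Z=\lip(M)$ for item a) and $Z=\lipC(M)$ for item b). Thus in each case the work splits into two tasks: to show that $Z$ is $1$-norming for $\mathcal F(M)$, and to show that $B_{\mathcal F(M)}$ is $\sigma(\mathcal F(M),Z)$-compact.

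For the norming task I would use that $B_{\mathcal F(M)}$ is the closed convex hull of the molecules $m_{x,y}:=\frac{\delta(x)-\delta(y)}{d(x,y)}$ (for $x\neq y$) and that finitely supported elements are norm dense in $\mathcal F(M)$, so that it is enough to norm molecules, and finite convex combinations of them, by $B_{Z}$. In item b) this is exactly the content of the hypothesis: a function $f\in\lipC(M)$ with $\Vert f\Vert\leq 1$ and $f(x)-f(y)>d(x,y)-\varepsilon$ witnesses $\langle f,m_{x,y}\rangle>1-\frac{\varepsilon}{d(x,y)}$, and a routine convexity argument upgrades this to $1$-norming on all of $\mathcal F(M)$. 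In item a) the analogous input is a classical smoothing lemma: since $M$ is proper, closed balls are compact, so any $g\in B_{\Lip(M)}$ can be perturbed, away from a prescribed finite set, into a function that is both locally flat and flat at infinity, at the cost of an arbitrarily small increase of the Lipschitz constant; this produces elements of $\lip(M)$ that nearly norm each given molecule.

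For the compactness of $B_{\mathcal F(M)}$ the key observation is that molecules are $\sigma(\mathcal F(M),Z)$-negligible near the diagonal and at infinity: for $f\in\lip(M)$ one has $\langle f,m_{x,y}\rangle=\frac{f(x)-f(y)}{d(x,y)}\to 0$ as $x,y\to p$ (local flatness) and also when $x,y$ leave a large ball (flatness at infinity). Combining this with the compactness supplied by the hypotheses — compact closed balls from properness in a), the compact topology $\tau$ together with $\tau$-continuity of the elements of $\lipC(M)$ in b) — one should get that the set $\{m_{x,y}:x\neq y\}\cup\{0\}$ is relatively $\sigma(\mathcal F(M),Z)$-compact, whence its closed convex hull $B_{\mathcal F(M)}$ is $\sigma(\mathcal F(M),Z)$-compact as well. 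I expect the main obstacle to be precisely this last step: one must guarantee that the $\sigma(\mathcal F(M),Z)$-limits of molecules do not escape $\mathcal F(M)$, and it is here that the structural hypotheses are genuinely used (countability in a), $\tau$-continuity and compactness of $(M,\tau)$ in b)). Once this is in place, the abstract criterion yields $\lip(M)^{*}=\mathcal F(M)$ and $\lipC(M)^{*}=\mathcal F(M)$ isometrically; the full details are carried out in \cite{Dalet05,AGPP} and \cite{Kalton04}, respectively.
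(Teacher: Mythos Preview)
The paper does not prove Theorem~\ref{thm:dualityfree}; it is stated purely as a compilation of results from \cite{Dalet05}, \cite{AGPP} and \cite{Kalton04}, so there is no in-paper argument to compare against. Your sketch is broadly in the spirit of those references: you correctly isolate the abstract criterion (a $1$-norming subspace $Z\subseteq \mathcal F(M)^*$ together with $\sigma(\mathcal F(M),Z)$-compactness of $B_{\mathcal F(M)}$ forces $\mathcal F(M)=Z^*$), and you correctly identify the norming inputs --- for b) the hypothesis is literally the uniform separation of points by $\lipC(M)$, while for a) the ``smoothing lemma'' producing enough little-Lipschitz functions is exactly the nontrivial ingredient established in \cite{Dalet05}.

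There is, however, a genuine gap in your compactness step. From relative $\sigma(\mathcal F(M),Z)$-compactness of $\{m_{x,y}:x\neq y\}\cup\{0\}$ one cannot conclude that its closed convex hull $B_{\mathcal F(M)}$ is $\sigma(\mathcal F(M),Z)$-compact: closed convex hulls of compact sets are compact only under completeness hypotheses on the ambient locally convex space, and $(\mathcal F(M),\sigma(\mathcal F(M),Z))$ carries no such structure a priori. In fact, once $Z$ is $1$-norming, $\sigma(\mathcal F(M),Z)$-compactness of $B_{\mathcal F(M)}$ is \emph{equivalent} to the surjectivity of the evaluation $\mathcal F(M)\to Z^*$ (one direction is your criterion, the other is Banach--Alaoglu), so the argument as written is circular. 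The cited proofs avoid this: Kalton argues surjectivity of $\mathcal F(M)\to\lipC(M)^*$ directly, exploiting the compactness of $(M,\tau)$ and $\tau$-continuity to approximate an arbitrary functional on $\lipC(M)$ by finitely supported measures; Dalet (and the treatment in \cite{AGPP}) instead invoke an abstract predual criterion for separable Banach spaces of Petunin--Plichko/Godefroy type, which replaces ball-compactness by a verifiable condition on $Z$. Your instinct that ``the main obstacle is this last step'' is right, but the obstacle is not merely that limits of molecules might escape $\mathcal F(M)$ --- it is that passing from a compact generating set to a compact ball is, in this topology, essentially the theorem itself.
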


The hypotheses in b) are satisfied in the case that $(M,d)$ is a uniformly discrete separable bounded metric space and there is a compact Hausdorff topology $\tau$ on $M$ such that $d$ is $\tau$-lower semi-continuous \cite[Corollary 3.9]{GLPPRZ}.

\section{Projective tensor products where every tensor attains its norm}\label{section:todoelealcanza}

In this section we will deepen in the study of the phenomenon of when $\NA_\pi(X\pten Y)=X\pten Y$ holds.

To begin with, the following easy observation allows us to get rid of the assumption of duality in many results due to the well-known fact that every dual Banach space is $1$-complemented in its bidual \cite[p. 221]{RojoYAmarillo}.

\begin{lema}\label{lema:complemented} Let $X,Y$ be Banach spaces such that $\NA(X\pten Y)=X\pten Y$. Assume $Z\subset Y$ is a $1$-complemented subspace.  Then, $\NA(X\pten Z)=X\pten Z$.
\end{lema}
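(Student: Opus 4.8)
The plan is to transfer representations of a tensor in $X \pten Z$ into $X \pten Y$ via the inclusion, attain the norm there by hypothesis, and then push the attaining representation back to $X \pten Z$ using a norm-one projection. Let $P \colon Y \to Z$ be a norm-one projection and let $j \colon Z \hookrightarrow Y$ be the inclusion; note $P \circ j = \mathrm{id}_Z$. These induce operators $\mathrm{id}_X \otimes j \colon X \pten Z \to X \pten Y$ and $\mathrm{id}_X \otimes P \colon X \pten Y \to X \pten Z$, both of norm one, whose composition is the identity on $X \pten Z$ (this is exactly the $1$-complementation of $X \pten Z$ in $X \pten Y$ recalled in the preliminaries, and in particular $\mathrm{id}_X \otimes j$ is an isometric embedding). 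So for $z \in X \pten Z$ we have $\|z\|_\pi = \|(\mathrm{id}_X \otimes j)(z)\|_{\pi,X\pten Y}$.

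First I would take $z \in X \pten Z$ and regard $w := (\mathrm{id}_X \otimes j)(z) \in X \pten Y$ with $\|w\|_\pi = \|z\|_\pi$. By the hypothesis $\NA_\pi(X \pten Y) = X \pten Y$, there are sequences $(x_n)_n \subset X$ and $(y_n)_n \subset Y$ with $w = \sum_{n=1}^\infty x_n \otimes y_n$ and $\|w\|_\pi = \sum_{n=1}^\infty \|x_n\|\,\|y_n\|$. Applying the norm-one operator $\mathrm{id}_X \otimes P$ and using its continuity, $z = (\mathrm{id}_X \otimes P)(w) = \sum_{n=1}^\infty x_n \otimes P(y_n)$, and moreover
\[
\|z\|_\pi \leq \sum_{n=1}^\infty \|x_n\|\,\|P(y_n)\| \leq \sum_{n=1}^\infty \|x_n\|\,\|y_n\| = \|w\|_\pi = \|z\|_\pi.
\]
Hence all inequalities are equalities, so $z = \sum_{n=1}^\infty x_n \otimes P(y_n)$ is a representation of $z$ in $X \pten Z$ realizing its projective norm, i.e. $z \in \NA_\pi(X \pten Z)$. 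Since $z$ was arbitrary, $\NA_\pi(X \pten Z) = X \pten Z$.

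The only point requiring a little care — and the closest thing to an obstacle — is making sure the series manipulation is legitimate: that $\mathrm{id}_X \otimes P$ applied to the convergent series $\sum x_n \otimes y_n$ yields the convergent series $\sum x_n \otimes P(y_n)$ with the norm bound above. This is immediate from boundedness and continuity of $\mathrm{id}_X \otimes P$ (the partial sums converge, so their images converge to the image of the limit) together with $\|x_n \otimes P(y_n)\|_\pi = \|x_n\|\,\|P(y_n)\| \leq \|x_n\|\,\|y_n\|$, which also guarantees absolute convergence of the image series. Everything else is just bookkeeping with the norm-one factorization $P \circ j = \mathrm{id}_Z$. Note the statement should be read with $\NA$ meaning $\NA_\pi$; I will write $\NA_\pi$ throughout for clarity.
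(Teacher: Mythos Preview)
Your proof is correct and follows essentially the same approach as the paper: embed $z\in X\pten Z$ into $X\pten Y$, take a norm-attaining representation there, and push it back with the norm-one projection $\mathrm{id}_X\otimes P$ to obtain a representation of $z$ realizing its projective norm in $X\pten Z$. The only difference is that you spell out the continuity/absolute convergence details for the series a bit more explicitly than the paper does.
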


\begin{proof}
Take $z\in X\ptensor Z $. Considering $z$ in $X\ptensor Y= \NA(X\ptensor Y)$, we can find sequences $(u_n)_n \subseteq X$ and $(v_n)_n\subseteq Y$ satisfying
\begin{equation*}
    z=\sum_{n=1}^{\infty} u_n\otimes v_n \quad \text{and}\quad \|z\|_{X\ptensor Y} = \sum_{n=1}^{\infty} \|u_n\| \|v_n\|.
\end{equation*}
Since there exists a projection $P\colon Y\rightarrow Z$ with $\|P\|=1$, we can consider the projection $Id\otimes P$ from $X\ptensor Y$ onto $X\ptensor Z$. Therefore,
\begin{equation*}
    z= (Id\otimes P)(z) = \sum_{n=1}^{\infty} u_n \otimes P(v_n) \in X\ptensor Z.
\end{equation*}
Furthermore, we have that
\begin{equation*}
    \sum_{n=1}^{\infty} \|u_n\|\|P(v_n)\| \leq  \sum_{n=1}^{\infty} \|u_n\|\|v_n\| = \|z\|_{X\ptensor Y} =\|z\|_{X\ptensor Z},
\end{equation*}
since $Z$ is 1-complemented in $Y$. Thus, we conclude that $\|z\|_{X\ptensor Z} =$\\ 
$ \sum_{n=1}^{\infty} \|u_n\|\|P(v_n)\|$ and $z\in \NA(X\ptensor Z)$. 
\end{proof}

As a consequence we get the following result, which improves \cite[Theorem~4.1]{DGLJRZ}.

\begin{coro}
Let $X$ be a finite dimensional Banach space whose unit ball is a polytope. If $Y$ is a Banach space which is $1$-complemented in $Y^{**}$ then $\NA_\pi(X\pten Y)=X\pten Y$.
\end{coro}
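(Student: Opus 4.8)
The plan is to deduce this directly from \cite[Theorem~4.1]{DGLJRZ} together with Lemma~\ref{lema:complemented}, without reproving anything from scratch. The key observation is that, although \cite[Theorem~4.1]{DGLJRZ} is stated for $Y$ a dual Banach space, the bidual $Y^{**}$ is always a dual space (namely $Y^{**}=(Y^*)^*$). Hence, since $X$ is a finite-dimensional Banach space whose unit ball is a polytope, the cited theorem applies verbatim to the pair $(X,Y^{**})$ and yields $\NA_\pi(X\pten Y^{**})=X\pten Y^{**}$.

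Next I would bring in the standing hypothesis that $Y$ is $1$-complemented in $Y^{**}$. Identifying $Y$ with its canonical image in $Y^{**}$ (a norm-one embedding), this image is a $1$-complemented subspace of $Y^{**}$, witnessed by a norm-one projection of $Y^{**}$ onto the canonical copy of $Y$. Therefore Lemma~\ref{lema:complemented}, applied with $Y^{**}$ in the role of the ambient space ``$Y$'' of that lemma and with $Y$ in the role of ``$Z$'', immediately gives $\NA_\pi(X\pten Y)=X\pten Y$, which is the assertion.

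There is essentially no obstacle in this argument: it is a two-line combination of an existing theorem with the complementation lemma proved just above. The only points deserving a sentence are that $Y^{**}$ is genuinely a dual space and that the $1$-complementation of $Y$ in $Y^{**}$ is implemented by a norm-one projection onto the canonical image of $Y$; both are standard. Finally, one should note that this statement is strictly more general than \cite[Theorem~4.1]{DGLJRZ}, since every dual Banach space is $1$-complemented in its bidual (see \cite[p.~221]{RojoYAmarillo}), while the converse fails in general (for instance $L_1(\mu)$ is $1$-complemented in its bidual but need not be a dual space).
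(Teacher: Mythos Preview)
Your proof is correct and follows exactly the same approach as the paper: apply \cite[Theorem~4.1]{DGLJRZ} to the pair $(X,Y^{**})$ to get $\NA_\pi(X\pten Y^{**})=X\pten Y^{**}$, and then invoke Lemma~\ref{lema:complemented} using that $Y$ is $1$-complemented in $Y^{**}$. Your additional remarks on why $Y^{**}$ is a dual space and on the generality over \cite[Theorem~4.1]{DGLJRZ} are accurate and simply make explicit what the paper leaves implicit.
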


\begin{proof}
Observe that $\NA_\pi(X\pten Y^{**})=X\pten Y^{**}$ by \cite[Theorem 4.1]{DGLJRZ}. Since $Y$ is $1$-complemented in $Y^{**}$, Lemma~\ref{lema:complemented} applies.
\end{proof}

Next we aim to provide a general condition guaranteeing that every tensor attains its projective norm that extends at once \cite[Proposition 3.6]{DJRRZ} and \cite[Theorem 4.1]{DGLJRZ}.

In order to do so, we present here a result of independent interest. This result is a slight improvement of the well-known fact that the adjoint of a linear isometry between Banach spaces is a quotient operator. Moreover, this result is well known for specialists on proximinality on Banach spaces, but we prefer to include a proof for the sake of completeness.

\begin{lema}{\label{lema:operadorcociente}}
    Let $X,Y$ be normed spaces and $T\colon X\rightarrow Y$ a linear isometry. Then, the operator $T^*\colon Y^*\rightarrow X^*$ is a quotient operator. In fact, given $x^*\in X^*$, there exists $y^*\in Y^*$ such that $T^*(y^*)=x^*$ and $\|y^*\|=\|x^*\|$.
\end{lema}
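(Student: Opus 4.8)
The plan is to prove this by a direct application of the Hahn--Banach theorem, exploiting that $T$ is an isometric embedding. The key point is that on the range $T(X) \subseteq Y$, the functional $x^*$ transfers to a well-defined bounded functional with the \emph{same} norm, and Hahn--Banach then extends it to all of $Y$ without increasing the norm.

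First I would fix $x^* \in X^*$ and define a linear functional $\varphi$ on the subspace $T(X) \subseteq Y$ by $\varphi(T(x)) := x^*(x)$ for $x \in X$. This is well defined and linear because $T$ is linear and injective (being an isometry). Next I would compute its norm on $T(X)$: for $x \in X$ we have $|\varphi(T(x))| = |x^*(x)| \leq \|x^*\| \|x\| = \|x^*\| \|T(x)\|$, using that $T$ preserves norms; hence $\|\varphi\|_{T(X)^*} \leq \|x^*\|$. Conversely, taking $x \in X$ with $\|x\| \leq 1$ and $|x^*(x)|$ close to $\|x^*\|$, the element $T(x) \in T(X)$ has norm $\|x\| \leq 1$ and $|\varphi(T(x))| = |x^*(x)|$ is close to $\|x^*\|$, so $\|\varphi\|_{T(X)^*} = \|x^*\|$. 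Then I would invoke the Hahn--Banach theorem to obtain $y^* \in Y^*$ extending $\varphi$ with $\|y^*\| = \|\varphi\|_{T(X)^*} = \|x^*\|$. Finally, for every $x \in X$ we have $(T^*(y^*))(x) = y^*(T(x)) = \varphi(T(x)) = x^*(x)$, so $T^*(y^*) = x^*$, which establishes the ``in fact'' clause. The statement that $T^*$ is a quotient operator follows immediately: it is surjective, and for each $x^*$ the infimum of $\|y^*\|$ over preimages equals $\|x^*\|$ since on one hand we just produced a preimage of norm $\|x^*\|$, and on the other hand any preimage $y^*$ of $x^*$ satisfies $\|x^*\| = \|T^*(y^*)\| \leq \|T^*\| \|y^*\| \leq \|y^*\|$ (as $\|T^*\| = \|T\| = 1$).

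There is essentially no serious obstacle here; the only mild subtlety is to make sure that the norm of $\varphi$ on the subspace is \emph{exactly} $\|x^*\|$ rather than merely $\leq \|x^*\|$, which is precisely where the isometry hypothesis (as opposed to mere boundedness of $T$) is used, and which in turn is what makes the extension norm-preserving. Everything else is a routine unwinding of definitions.
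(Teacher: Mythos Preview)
Your proof is correct and follows essentially the same approach as the paper's own proof: define the functional on $T(X)$ via $\varphi(T(x))=x^*(x)$, compute that $\|\varphi\|=\|x^*\|$ using that $T$ is an isometry, extend by Hahn--Banach, and check that the extension is a preimage under $T^*$. Your version simply spells out the norm equality and the quotient-operator conclusion in slightly more detail than the paper does.
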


\begin{proof}
    Fix $x^*\in X^*$. Let $\phi \colon T(X)\rightarrow \R$ be the functional defined by $\phi(T(x)) = x^*(x)$ for every $x\in X$. Since $T$ is a linear isometry, it is clear that $\phi$ is linear and $\|\phi\|=\|x^*\|$. By Hahn-Banach theorem, there exists $y^*\in Y^*$, a linear extension of $\phi$ such that 
    \begin{align*}
        \|y^*\|&=\|\phi\|=\|x^*\|.
    \end{align*}
    Therefore, we have that
    \begin{equation*}
        T^*(y^*)(x) = y^*(T(x)) = \phi(T(x)) = x^*(x), \quad \forall x\in X.
    \end{equation*}
    Thus, $T^*(y^*)=x^*$.\end{proof}

Now we present the following result, which is one of the main theorems in this paper, which will allow us to extend considerably the class of examples of Banach spaces $X$ and $Y$ such that $\NA_\pi(X\pten Y)=X\pten Y$.

\begin{thm}\label{teo:subspredl1attains} Let $Z$ be a Banach space such that $Z^*=\ell_1(I)$ isometrically. Let $X$ be a subspace of $Z$ and $Y$ be any Banach space. If either $X^*$ or $Y^{*}$ has the AP, then $\NA(X^*\pten Y^*)=X^*\pten Y^*$.
\end{thm}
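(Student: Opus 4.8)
The plan is to reduce the problem to the known case $\NA_\pi(\ell_1(I)\pten Y^*)=\ell_1(I)\pten Y^*$ (that is, \cite[Proposition 3.6]{DJRRZ}) by exploiting the duality structure. Since $X\subseteq Z$ with $Z^*=\ell_1(I)$, the inclusion $\iota\colon X\hookrightarrow Z$ is a linear isometry, so by Lemma~\ref{lema:operadorcociente} the adjoint $\iota^*\colon Z^*=\ell_1(I)\to X^*$ is a quotient operator which moreover lifts norms exactly: every $x^*\in X^*$ has a preimage of the same norm. Now consider the operator $\iota^*\otimes \mathrm{Id}_{Y^*}\colon \ell_1(I)\pten Y^*\to X^*\pten Y^*$. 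Because $\iota^*$ is a metric surjection and tensoring with the identity on $Y^*$ behaves well with respect to projective norms, I expect $\iota^*\otimes \mathrm{Id}$ to also be a metric surjection of $\ell_1(I)\pten Y^*$ onto $X^*\pten Y^*$; here is where the AP hypothesis on $X^*$ or $Y^*$ enters, to guarantee that this tensored map is genuinely onto (a quotient of projective tensor products need not be surjective without such an assumption — compare the discussion of $(X\iten Y)^*$ in the preliminaries).

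The second main step is the norm-attainment transfer. Given $z\in X^*\pten Y^*$, first I would lift it: choose $w\in \ell_1(I)\pten Y^*$ with $(\iota^*\otimes\mathrm{Id})(w)=z$ and $\|w\|_\pi=\|z\|_\pi$ (using that $\iota^*\otimes\mathrm{Id}$ is a metric surjection, so the infimum defining the quotient norm is attained, or at least approximated and then a limiting/selection argument; I may need a compactness or a careful exact-lifting argument here). By \cite[Proposition 3.6]{DJRRZ}, $w$ attains its projective norm, so we can write $w=\sum_n u_n\otimes v_n$ with $\|w\|_\pi=\sum_n\|u_n\|\|v_n\|$, where $u_n\in\ell_1(I)$, $v_n\in Y^*$. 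Applying $\iota^*\otimes\mathrm{Id}$ gives $z=\sum_n \iota^*(u_n)\otimes v_n$ and hence
\[
\|z\|_\pi\leq \sum_{n=1}^\infty \|\iota^*(u_n)\|\,\|v_n\|\leq \sum_{n=1}^\infty \|u_n\|\,\|v_n\|=\|w\|_\pi=\|z\|_\pi,
\]
so all inequalities are equalities and $z\in\NA_\pi(X^*\pten Y^*)$. This is exactly the mechanism of Lemma~\ref{lema:complemented}, but with a norm-nonincreasing surjection in place of a projection.

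The delicate point — and what I expect to be the main obstacle — is the exact norm-attaining lifting in the first paragraph: I need that $\iota^*\otimes\mathrm{Id}_{Y^*}$ is not merely a norm-nonincreasing surjection but a \emph{metric surjection} (quotient map), and ideally that the lifting of a given $z$ can be done with $\|w\|_\pi=\|z\|_\pi$ exactly rather than up to $\varepsilon$. Surjectivity of $\iota^*\otimes\mathrm{Id}$ and the quotient-norm identity are where one must invoke the approximation property; the natural route is to dualise. Indeed $(\iota^*\otimes\mathrm{Id}_{Y^*})^*$ should be identified, via the AP hypothesis and the identification $(X^*\pten Y^*)^*=\mathcal L(X^*,Y^{**})$ together with a representation of $(\ell_1(I)\pten Y^*)^*$, with an isometric embedding (essentially $\iota^{**}\otimes\mathrm{Id}$ or its transpose), and then the fact that the pre-adjoint of an isometric embedding is a metric surjection gives what we want. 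Once $\iota^*\otimes\mathrm{Id}$ is known to be a metric surjection, the exact lifting can be arranged by a standard argument: approximate $z$ by liftings $w_k$ with $\|w_k\|_\pi\to\|z\|_\pi$, and either replace the whole scheme by an $\varepsilon$-refinement absorbed into the series representation of $w$ from \cite[Proposition 3.6]{DJRRZ}, or observe directly that \cite[Proposition 3.6]{DJRRZ} only needs a lifting with controlled norm to still produce an exact norm-attaining representation of $z$ after pushing down. I would therefore organise the write-up as: (1) set up $\iota$, apply Lemma~\ref{lema:operadorcociente}; (2) show $\iota^*\otimes\mathrm{Id}_{Y^*}$ is a metric surjection, invoking the AP on $X^*$ or $Y^*$; (3) lift $z$, apply \cite[Proposition 3.6]{DJRRZ}, push down, and read off norm-attainment from the chain of inequalities above.
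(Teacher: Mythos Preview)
Your overall architecture is exactly the paper's: lift $z\in X^*\pten Y^*$ through $\iota^*\otimes\mathrm{Id}_{Y^*}$ to some $u\in\ell_1(I)\pten Y^*$ with $\|u\|=\|z\|$, invoke \cite[Proposition~3.6]{DJRRZ}, push the optimal representation down, and read off the chain of equalities. The difference lies entirely in how you justify the exact-norm lifting, and your proposed route for that step has a genuine gap.

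You plan to show $\iota^*\otimes\mathrm{Id}_{Y^*}$ is a metric surjection by checking that its adjoint $(\iota^*\otimes\mathrm{Id}_{Y^*})^*$ is an isometric embedding into $\mathcal L(\ell_1(I),Y^{**})$. But this direction is too weak: knowing that $S^*$ is an isometric embedding only tells you that $\overline{S(B_A)}=B_B$, i.e.\ $S$ has dense range and approximately lifts norms. It does not give surjectivity, and it certainly does not give an \emph{exact} preimage with the same norm. Your fallback---approximate liftings $w_k$ with $\|w_k\|\to\|z\|$ and an ``$\varepsilon$-refinement absorbed into the series''---does not rescue norm-attainment: pushing down an optimal representation of $w_k$ gives $\sum\|\iota^*(u_n)\|\|v_n\|\leq\|w_k\|=\|z\|+\varepsilon$, which is only density of $\NA_\pi$, not equality.

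The paper resolves this by going the other way around, via the \emph{injective} tensor product. Since $X\iten Y$ respects subspaces, the map $\iota\otimes\mathrm{Id}\colon X\iten Y\to Z\iten Y$ is a linear isometry. Apply Lemma~\ref{lema:operadorcociente} to \emph{this} isometry (not to $\iota$ alone): its adjoint $(\iota\otimes\mathrm{Id})^*\colon (Z\iten Y)^*\to(X\iten Y)^*$ then admits exact-norm Hahn--Banach liftings. The AP hypothesis (together with the RNP of $X^*$, inherited from $\ell_1(I)$) is used precisely to identify $(Z\iten Y)^*=\ell_1(I)\pten Y^*$ and $(X\iten Y)^*=X^*\pten Y^*$, so that $(\iota\otimes\mathrm{Id})^*$ \emph{is} the map $\iota^*\otimes\mathrm{Id}_{Y^*}$ you wanted. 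The exact lifting then comes for free, and steps (2)--(3) of your outline go through verbatim.
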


\begin{proof}
    Let $i\colon X\rightarrow Z$ be the isometric embedding and $Id\colon Y\rightarrow Y$ be the identity operator. Since the injective tensor product respect subspaces, $i\otimes Id \colon X\iten Y\rightarrow Z\iten Y$ is a linear isometry \cite[Proposition 3.2]{ryan}. Now, take $z\in (X\iten Y)^* = X^*\pten Y^{*}$ (this identification follows by \cite[Theorem 5.33]{ryan} because $X^*$ has the RNP since $Z^*=\ell_1(I)$ has the RNP). So, by Lemma \ref{lema:operadorcociente} the operator $(i\otimes Id)^*=i^*\otimes Id^* \colon \ell_1(I)\pten Y^{*}\rightarrow X^*\pten Y^{*}$ is a quotient operator and there is some $u\in \ell_1(I)\pten Y^{*}$ with $\|u\|=\|z\|$ and $i^*\otimes Id^* (u)=z$. Since $\NA_\pi(\ell_1(I)\pten Y^{*}) = \ell_1(I)\pten Y^{*}$ (\cite[Proposition~3.6 (b)]{DJRRZ}) we can find sequences $(\lambda_n)_n\subseteq \ell_1(I)$ and $(y^{*}_n)_n\subseteq Y^{*}$ such that
    \begin{equation*}
        u=\sum_{n=1}^{\infty} \lambda_n\otimes y^{*}_n \quad \text{and}\quad \|u\|=\sum_{n=1}^{\infty} \|\lambda_n\|\|y^{*}_n\|.
    \end{equation*}
    Therefore, denoting $x^*_n = i^*(\lambda_n)$ for every $n\in\natu$, we have that
    \begin{equation*}
        z=(i^*\otimes Id^*)(u) = \sum_{n=1}^{\infty} (i^*\otimes Id^*) (\lambda_n\otimes y^{*}_n) = \sum_{n=1}^{\infty} i^*(\lambda_n)\otimes Id^*(y^{*}_n) = \sum_{n=1}^{\infty} x^*_n\otimes y^{*}_n
    \end{equation*}
    and, since $\|i^*\|\leq 1$, we obtain
    \begin{equation*}
        \|z\|\leq \sum_{n=1}^{\infty} \|x^*_n\|\|y^{*}_n\| \leq \sum_{n=1}^{\infty} \|\lambda_n\|\|y^{*}_n\|=\|u\|=\|z\|.
    \end{equation*}
    Thus, $z\in \NA_\pi(X^*\pten Y^*)$.\end{proof}

An application of Lemma~\ref{lema:complemented} and the above theorem yields the following result:

\begin{coro}\label{coro:subspredl1attains}
Let $Z$ be a Banach space such that $Z^*=\ell_1(I)$ isometrically. Let $X$ be a subspace of $Z$ and $Y$ be any Banach space which is $1$-complemented in $Y^{**}$. If either $X^*$ or $Y^{**}$ has the AP, then $\NA(X^*\pten Y)=X^*\pten Y$.
\end{coro}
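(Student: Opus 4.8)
The plan is to combine Theorem~\ref{teo:subspredl1attains} with Lemma~\ref{lema:complemented} in the obvious way. First I would apply Theorem~\ref{teo:subspredl1attains} with $Y$ replaced by $Y^*$: since $Z^*=\ell_1(I)$ isometrically, $X$ is a subspace of $Z$, and since either $X^*$ or $(Y^*)^*=Y^{**}$ has the AP, the theorem yields
\[
\NA_\pi(X^*\pten Y^{**})=X^*\pten Y^{**}.
\]
Next, because $Y$ is $1$-complemented in $Y^{**}$, I would view $Y$ as a $1$-complemented subspace $Z'=Y\subset Y^{**}$ and invoke Lemma~\ref{lema:complemented} with the roles $X\rightsquigarrow X^*$, $Y\rightsquigarrow Y^{**}$, $Z\rightsquigarrow Y$. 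That lemma then gives $\NA_\pi(X^*\pten Y)=X^*\pten Y$, which is exactly the statement to be proved.

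The only point requiring care is matching hypotheses: Lemma~\ref{lema:complemented} is stated for the $1$-complemented subspace sitting in the \emph{second} factor, which is precisely our situation here ($Y$ inside $Y^{**}$), so no transposition of factors is needed. I would also note explicitly, as in the proof of Theorem~\ref{teo:subspredl1attains}, that the identification $(X\iten Y^*)^*=X^*\pten Y^{**}$ we are implicitly relying on is legitimate: $X^*$ has the RNP because it is (isometric to) a subspace-stable consequence of $Z^*=\ell_1(I)$ having the RNP—more precisely $X^*$ is a quotient of $\ell_1(I)$ via $i^*$, hence has the RNP—and one of $X^*$, $Y^{**}$ has the AP by assumption, so \cite[Theorem 5.33]{ryan} applies. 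This is handled inside the proof of Theorem~\ref{teo:subspredl1attains}, so in the corollary's proof it suffices to cite the theorem.

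There is no real obstacle: the statement is a formal corollary obtained by feeding $Y^*$ into an already-proved theorem and then descending along a norm-one projection via an already-proved lemma. If anything, the mild subtlety is purely bookkeeping—being sure that the AP hypothesis "$X^*$ or $Y^{**}$ has the AP" is exactly what Theorem~\ref{teo:subspredl1attains} needs when its $Y$ is instantiated as $Y^*$ (whose dual is $Y^{**}$), and that Lemma~\ref{lema:complemented} imposes no approximation-property hypothesis of its own. Concretely, the proof I would write is: \emph{By Theorem~\ref{teo:subspredl1attains} applied to the Banach space $Y^*$, since either $X^*$ or $Y^{**}=(Y^*)^*$ has the AP, we have $\NA_\pi(X^*\pten Y^{**})=X^*\pten Y^{**}$. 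As $Y$ is $1$-complemented in $Y^{**}$, Lemma~\ref{lema:complemented} applied with the $1$-complemented subspace $Y\subset Y^{**}$ gives $\NA_\pi(X^*\pten Y)=X^*\pten Y$.}
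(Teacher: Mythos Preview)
Your proof is correct and matches the paper's own approach exactly: apply Theorem~\ref{teo:subspredl1attains} with $Y^*$ in place of $Y$, then descend via Lemma~\ref{lema:complemented} using the $1$-complementation of $Y$ in $Y^{**}$. One inessential aside is inaccurate: the RNP is \emph{not} preserved under quotients (e.g.\ $L_1[0,1]$ is a quotient of $\ell_1$), so ``$X^*$ is a quotient of $\ell_1(I)$, hence has the RNP'' is not a valid argument; the correct reason, as used in the paper, is that $Z^*=\ell_1(I)$ has the RNP and this passes to $X^*$ via the characterisation that a dual space has the RNP iff every separable subspace of the predual has separable dual---but as you note, this is already absorbed into Theorem~\ref{teo:subspredl1attains} and need not appear in the corollary's proof.
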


Before exhibiting new examples of spaces where all the elements attain their projective norm, it is pertinent to explain how Theorem~\ref{teo:subspredl1attains} generalises \cite[Theorem 4.1]{DGLJRZ} (the generalisation of \cite[Proposition 3.6]{DJRRZ} is evident).

\begin{rema}
If $X$ is a finite-dimensional Banach space whose unit ball is a polytope,  then $B_{X^*}$ is a finite intersection of halfspaces and so it is a polytope too.  
Consequently, $B_{X^*}=\co(\{f_1,\ldots, f_n\})$, where $f_1,\ldots, f_n$ are the extreme points of $B_{X^*}$. Now the mapping
$$\begin{array}{ccc}
    X & \longrightarrow & \ell_\infty^n \\
    x & \longmapsto & (f_1(x),\ldots, f_n(x))
\end{array}$$
is a linear into isometry since $\norm{x}=\sup\{|x^*(x)|: x\in B_{X^*}\}=\max\{|f_i(x)|: i\in\{1,\ldots,n\}\}$ for $x\in X$.
\end{rema}

A well-known example of Banach space $Z$ satisfying $Z^*=\ell_1(I)$ is $C(K)$ where $K$ is a scattered compact topological space \cite[Theorem~14.24]{RojoYAmarillo}. Hence the following result holds.

\begin{coro}\label{cor:C(K)attains} Let $X$ be a subspace of $C(K)$, where $K$ is a compact Hausdorff scattered space, and $Y$ be any Banach space which is $1$-complemented in $Y^{**}$. If $X^*$ has the AP then $\NA(X^*\pten Y)=X^*\pten Y$.
\end{coro}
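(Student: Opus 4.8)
The plan is to derive Corollary~\ref{cor:C(K)attains} directly from Theorem~\ref{teo:subspredl1attains} (or rather its consequence Corollary~\ref{coro:subspredl1attains}) by exhibiting the appropriate Banach space $Z$. First I would recall the classical fact (cited in the excerpt as \cite[Theorem~14.24]{RojoYAmarillo}) that if $K$ is a compact Hausdorff scattered space, then $C(K)^* = \ell_1(I)$ isometrically, where $I$ can be taken to be $K$ itself (the dual is the space of finite signed measures on $K$, which for scattered $K$ are purely atomic and hence $\ell_1$-like). So I set $Z = C(K)$; this is a Banach space with $Z^* = \ell_1(I)$ isometrically, exactly the hypothesis of Theorem~\ref{teo:subspredl1attains}.

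Next, given a subspace $X \subseteq C(K) = Z$ and a Banach space $Y$ that is $1$-complemented in $Y^{**}$, I would simply invoke Corollary~\ref{coro:subspredl1attains} with this choice of $Z$: under the assumption that $X^*$ has the AP, that corollary yields $\NA(X^* \pten Y) = X^* \pten Y$. This is essentially immediate once the identification $C(K)^* = \ell_1(I)$ is in hand. I should note that the AP hypothesis in Corollary~\ref{coro:subspredl1attains} is stated for ``either $X^*$ or $Y^{**}$''; in the present statement we only assume $X^*$ has the AP, which is one of the two admissible options, so nothing more is needed. Thus the proof is a one-line deduction.

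There is no real obstacle here: the corollary is a specialization of an already-proved general theorem to a concrete family of preduals of $\ell_1(I)$. The only point that deserves a sentence of care is the fact $C(K)^* = \ell_1(I)$ for scattered compact $K$, which is exactly where ``scattered'' is used — on a scattered compact space every Radon measure is atomic, so the dual of $C(K)$ is the $\ell_1$-sum over $K$ of the point masses — and this is quoted rather than reproved. Hence I would write the proof essentially as: ``By \cite[Theorem~14.24]{RojoYAmarillo}, $Z := C(K)$ satisfies $Z^* = \ell_1(K)$ isometrically. Now apply Corollary~\ref{coro:subspredl1attains}.''
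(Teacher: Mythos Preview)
Your proposal is correct and matches the paper's approach exactly: the corollary is stated without a separate proof, deduced from Corollary~\ref{coro:subspredl1attains} via the observation (cited as \cite[Theorem~14.24]{RojoYAmarillo}) that $C(K)^*=\ell_1(K)$ isometrically when $K$ is scattered. Your one-line deduction is precisely what the paper does.
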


 The following example shows that the hypothesis of $K$ being scattered cannot be removed in Corollary~\ref{cor:C(K)attains}. Moreover, note that $C(\mathbb T)^*\pten \mathbb R^2$ is a dual space, therefore the $w^*$-compactness of the ball is not enough to ensure that every tensor attains its norm.

\begin{ejem}
    We are going to see that 
    \begin{equation*}
        \NA\left(C(\T)^*\ptensor \R^2\right) \neq C(\T)^*\ptensor \R^2,
    \end{equation*}
    where $\mathbb T$ stands for the unit circle of the euclidean space $\mathbb R^2$.     Firstly, endow $\T$ with the Haar measure and define $\phi \colon L_1(\T) \rightarrow C(\T)^*$, such that for $f\in L_1(\T)$ the functional $\phi(f)$ is given by 
    \begin{equation*}
        \phi(f)(g) = \int_{\T} g(t)f(t)\text{ d}t, \quad \text{for every } g\in C(\T).
    \end{equation*}
    It is well known that $\phi$ is a linear into isometry whose image is precisely
    the space of absolutely continuous measures with respect to the Haar measure. It follows from Lebesgue's decomposition that there exists a projection $P\colon C(\T)^*\rightarrow L_1(\T)$ with $\|P\|=1$. 
    Now, Example 3.12(a) in \cite{DJRRZ} shows that there exists an element in $L_1(\T)\pten \mathbb R^2$ which does not attain its projective norm. By Lemma \ref{lema:complemented}, the same holds for $C(\T)^*\pten \mathbb R^2$. 
\end{ejem}

In what follows we will show some new examples where Corollary~\ref{cor:C(K)attains} applies. To this end, we will see that certain spaces of Lipschitz functions embed into $C(K)$ for a scattered $K$. 

\begin{prop}{\label{prop:S0(M)}}
    Let $M$ be a countable proper metric space. Then, there exists a countable compact set $K$ and a linear into isometry $\phi \colon \lip(M) \rightarrow C(K)$.
\end{prop}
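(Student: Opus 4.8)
The goal is to realize $\lip(M)$ isometrically inside $C(K)$ for a countable compact $K$. Since $M$ is countable and proper, Theorem~\ref{thm:dualityfree}(a) tells us $\lip(M)^* = \mathcal F(M)$ isometrically, so the natural candidate for $K$ is a suitable weak$^*$-compact subset of $\mathcal F(M)$, equipped with the weak$^*$ topology, and the embedding $\phi$ will send $f\in\lip(M)$ to its evaluation map on $K$. The issue is that the whole unit ball $B_{\mathcal F(M)}$ need not be countable (it is weak$^*$-compact but typically of cardinality continuum), so we cannot take $K=B_{\mathcal F(M)}$ directly; we must find a smaller norming set that is countable and weak$^*$-compact.

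First I would recall that a norming set for $\lip(M)$ inside $\mathcal F(M)$ is given by the \emph{molecules} $m_{x,y} := \dfrac{\delta(x)-\delta(y)}{d(x,y)}$ for $x\neq y$ in $M$, since $\norm{f}_L = \sup_{x\neq y} \abs{\langle f, m_{x,y}\rangle}$ for every $f$. As $M$ is countable, the set $\{m_{x,y} : x,y\in M,\ x\neq y\}$ is countable; call it $D$. The natural compactification to take is $K := \overline{D\cup\{0\}}^{w^*}$, the weak$^*$-closure of $D\cup\{0\}$ in $\mathcal F(M)$, which is weak$^*$-compact (being a weak$^*$-closed subset of a multiple of $B_{\mathcal F(M)}$, as $\norm{m_{x,y}}=1$). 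Then $\phi\colon \lip(M)\to C(K)$, $\phi(f)(\mu) := \langle f,\mu\rangle$, is a well-defined linear contraction, and it is an isometry because $D\subseteq K$ is already norming: $\norm{\phi(f)}_\infty = \sup_{\mu\in K}\abs{\langle f,\mu\rangle} \geq \sup_{m\in D}\abs{\langle f,m\rangle} = \norm{f}_L \geq \norm{\phi(f)}_\infty$, using $\norm{\mu}\leq 1$ on $K$ and the fact that $\langle f,\cdot\rangle$ is weak$^*$-continuous on $\mathcal F(M)$.

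The remaining — and genuinely delicate — point is that $K$ must be \emph{countable}, i.e. the weak$^*$-closure of the countable set $D$ picks up only countably many new points. This is exactly where the little-Lipschitz hypothesis and properness enter. I would argue that every weak$^*$-cluster point of $D$ that is not already in $D$ must be a scalar multiple of some $\delta(x)$ or be $0$: properness of $M$ means closed balls are compact, hence countable, so $M$ is ($\sigma$-compact and) locally compact; given a sequence of molecules $m_{x_n,y_n}$ converging weak$^*$, either $(x_n),(y_n)$ eventually leave every ball — in which case flatness at infinity of elements of $\lip(M)$ forces the limit functional to vanish on a dense set, so the limit is $0$ — or, passing to a subsequence, $x_n\to x$ and $y_n\to y$ in $M$; if $x\neq y$ the limit is $m_{x,y}\in D$, while if $x_n,y_n\to p$ then local flatness of every $f\in\lip(M)$ forces $\langle f, m_{x_n,y_n}\rangle\to 0$, so again the limit is $0$. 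Hence $K \subseteq D\cup\{0\}$ up to identifying limits, giving $K = D\cup\{0\}$ (a countable set) once one checks it is already weak$^*$-closed, or at worst $K$ is the one-point-type compactification of the countable locally compact space $D$. I expect the bookkeeping in this cluster-point analysis — in particular uniformizing the local-flatness estimate to handle the case where $d(x_n,y_n)$ stays bounded below while $x_n,y_n$ wander — to be the main obstacle, and properness (compactness of balls, letting one extract convergent subsequences) together with flatness at infinity is the tool that resolves it.
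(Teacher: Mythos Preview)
Your approach works, but it is a detour compared with the paper's argument. The paper never invokes the duality $\lip(M)^*=\mathcal F(M)$: it simply takes $K$ to be the Alexandroff one-point compactification of $M^2\setminus\Delta$ (locally compact because $M$ is proper, countable because $M$ is), and sets $\phi(f)(x,y)=\dfrac{f(x)-f(y)}{d(x,y)}$ with $\phi(f)(\infty)=0$. Continuity at $\infty$ amounts to producing, for each $\varepsilon>0$, a compact $L\subset M^2\setminus\Delta$ outside which the incremental quotient is below $\varepsilon$; the paper takes $L=\{(x,y):x,y\in B(0,r),\ d(x,y)\geq\delta\}$ and uses flatness at infinity for pairs with a far-away coordinate and uniform local flatness on the compact ball $B(0,r)$ for close pairs. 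Your set $D\cup\{0\}$ is exactly this $K$ seen through the identification $(x,y)\mapsto m_{x,y}$, but you reach it via a weak$^*$-closure computation that needs the predual theorem and, implicitly, the separability of $\lip(M)$ (to justify arguing with sequences rather than nets). Both are true here---$\lip(M)$ is separable because its dual $\mathcal F(M)$ is---but the direct argument sidesteps them entirely.

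Your cluster-point dichotomy also has a real gap. The negation of ``both $(x_n)$ and $(y_n)$ eventually leave every ball'' is not ``both admit convergent subsequences''; you must handle the mixed case $x_n\to x$ while $d(y_n,0)\to\infty$. This case does yield limit $0$: fix $R$ with $\Vert f|_{M\setminus B(0,R)}\Vert_L<\varepsilon$ and any $z_0\in M\setminus B(0,R)$, bound $|f(y_n)|\leq |f(z_0)|+\varepsilon\, d(y_n,z_0)$, and divide by $d(x_n,y_n)\geq d(y_n,0)-R\to\infty$. But this is precisely the estimate you deferred as ``bookkeeping'', and it is not covered by either of the two cases you wrote down. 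The paper's formulation absorbs all three cases into the single observation that the complement of $L$ is a neighbourhood of $\infty$, which is why its proof is shorter and cleaner.
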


\begin{proof}
    Let $K=(M^2\setminus \Delta) \cup \{\infty\}$ be Alexandroff compactification of $M^2\setminus \Delta$, which is indeed a countable compact set.
    We define $\phi \colon \lip(M) \rightarrow C(K)$  
    as follows:
     \begin{equation*}
         \phi(f)(z) = \left\{\begin{array}{ccc}
             \frac{f(x)-f(y)}{d(x,y)} & \text{if} & z=(x,y) \in M^2\setminus \Delta \\
              0& \text{if} &  z=\infty. 
         \end{array}\right.
     \end{equation*}
     Since $f$ is flat at infinity,  for every $\varepsilon>0$, we can find $r >0$ such that $\frac{|f(x)-f(y)|}{d(x,y)} < \varepsilon$ if $x\notin B(0,r)$ or $y\notin B(0,r)$. Furthermore, since $M$ is locally flat, it is uniformly locally flat on $B(0,r)$, thus there exists $0<\delta<r$ such that $\frac{|f(x)-f(y)|}{d(x,y}<\varepsilon$ whenever $0<d(x,y)<\delta$. Then, the set
     \[L=\{(x,y)\in M^2\setminus \Delta : x,y\in B(0,r), d(x,y)\geq \delta\}\]
     is compact, so its complement $K\setminus L$ is an open neighbourhood of $\infty$ with  $|\phi(f)(z)|<\varepsilon$ for every $z\in K\setminus L$. This shows that $\phi(f)\in C(K)$ so $\phi$ is well defined, and clearly it is also linear. Finally, $\phi$ is an into isometry since 
     \begin{align*}
         \|\phi(f)\|_{\infty} &= \sup \{|\phi(f)(z)|\;:\; z\in (M^2\setminus \Delta) \cup \{\infty\} \} \\&= \sup \left\{\frac{|f(x)-f(y)|}{d(x,y)}\;:\; (x,y)\in M^2\setminus \Delta \right\} = \|f\|_L
     \end{align*}
     holds, and the proof is finished.
\end{proof}

\begin{prop}\label{prop:liptau(M)}
    Let $(M,d)$ be a countable metric space, and $\tau$ be some topology such that $(M,\tau)$ is compact. Suppose that $d$ is $\tau$-lower semicontinuous and $V=\{d(x,y) \;:\; (x,y)\in M^2\}\subseteq \R_0^+$ is a compact set. Then, there exists a  countable compact set $K$ and a linear into isometry $\phi \colon \lipC(M) \rightarrow C(K)$.
\end{prop}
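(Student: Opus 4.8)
Looking at Proposition~\ref{prop:liptau(M)}, the plan is to mimic the proof of Proposition~\ref{prop:S0(M)}, but the construction of the compact set $K$ must now incorporate the topology $\tau$ and the compact set of distance values $V$ so that the quotient map $(x,y)\mapsto \frac{f(x)-f(y)}{d(x,y)}$ extends continuously.

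First I would set $\Delta = \{(x,x) : x\in M\}$ and consider the space $M^2\setminus\Delta$ with the product topology $\tau\times\tau$; since $d$ is $\tau$-lower semicontinuous and $V$ is compact (hence bounded, say $V\subseteq[0,R]$), I would build a compact space $K$ containing $M^2\setminus\Delta$ as a (not necessarily dense) subset on which the evaluation extends. The natural candidate is to take the closure of the graph $\{(x,y,d(x,y)) : (x,y)\in M^2\setminus\Delta\}$ inside the compact space $(M,\tau)\times(M,\tau)\times V$; call this closure $K$. This $K$ is compact (closed subset of a compact space) and countable (being a subset of $M\times M\times V$ with $M$ countable and $V$ compact countable — here I would use that $V\subseteq\R$ is a compact countable set, which makes $M\times M\times V$ countable). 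Lower semicontinuity of $d$ guarantees that on $K$ the third coordinate $t$ satisfies $t\leq d(x,y)$ whenever the first two coordinates are a genuine pair, and more importantly that a sequence in the graph with $(x_n,y_n)\to(x,y)$ and $d(x_n,y_n)\to t$ forces $t\geq d(x,y)$, so on points of $K$ coming from off-diagonal limits we still control $t$.

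Next I would define $\phi\colon \lipC(M)\to C(K)$ by $\phi(f)(x,y,t) = \frac{f(x)-f(y)}{t}$ when $t>0$, and $\phi(f)(x,y,0)=0$ when $t=0$. The content is checking continuity of $\phi(f)$ on $K$. At points with $t>0$ this is routine since $f$ is $\tau$-continuous (that is the whole point of working in $\lipC(M)$ rather than $\lip(M)$) and division by $t$ is continuous away from zero. The delicate points are those $(x,y,0)\in K$: here I would use that $f\in\lip(M)$ is locally flat, so given $\varepsilon>0$ and the point $p$ (note $x=y=p$ necessarily when $t=0$, by lower semicontinuity of $d$ forcing $d(x,y)\leq 0$), there is a $\tau$-neighborhood issue — actually local flatness is a metric condition, so I would need to combine it with a neighborhood argument: near a point $(p,p,0)\in K$, any $(x,y,t)\in K$ nearby has $t$ small and $x,y$ both $\tau$-close to $p$; since the graph points approximating it have small $d(x,y)$, local flatness at $p$ gives $\frac{|f(x)-f(y)|}{d(x,y)}<\varepsilon$, and then $\frac{|f(x)-f(y)|}{t}$ is controlled because $t$ is close to $d(x,y)$ (both small) — this requires care to make the ratio $t/d(x,y)$ bounded below, which is exactly where I expect the main obstacle to be.

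The main obstacle, then, is controlling $\phi(f)$ near the ``collapsed'' points where $t=0$: one must show that no point of $K$ has a genuine off-diagonal pair $(x,y)$ with $x\neq y$ but limiting distance value $t=0$, and that the continuity estimate survives. I would resolve the first part by lower semicontinuity of $d$: if $(x_n,y_n)\to(x,y)$ in $\tau\times\tau$ and $d(x_n,y_n)\to 0$, then $d(x,y)\leq\liminf d(x_n,y_n)=0$, so $x=y$. For the estimate, given local flatness at $p$ I would pick $\delta>0$ with $\frac{|f(u)-f(v)|}{d(u,v)}<\varepsilon$ for $u,v$ in a metric ball; then for $(x,y,t)\in K$ with $x=y=p$ in the limit, approximate by graph points $(x_k,y_k,d(x_k,y_k))$ and pass to the limit in $\frac{|f(x_k)-f(y_k)|}{d(x_k,y_k)}<\varepsilon$, using $\tau$-continuity of $f$ and $d(x_k,y_k)\to t$, to conclude $|\phi(f)(x,y,t)|\leq\varepsilon$. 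Finally, the isometry property follows as in Proposition~\ref{prop:S0(M)}: $\|\phi(f)\|_\infty = \sup_{(x,y)\in M^2\setminus\Delta}\frac{|f(x)-f(y)|}{d(x,y)} = \|f\|_L$, since the graph points are dense-enough in $K$ that the supremum over $K$ equals the supremum over $M^2\setminus\Delta$ (the added limit points only give values that are limits of, hence $\leq$, existing ratios).
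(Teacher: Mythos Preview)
Your approach is essentially the paper's: embed $\lipC(M)$ into $C(K)$ via $\phi(f)(x,y,t)=(f(x)-f(y))/t$, with $K$ a countable compact subset of $(M,\tau)^2\times V$. The paper simply takes $K=\{(x,y,t)\in M^2\times V: d(x,y)\le t\}$, which is closed by lower semicontinuity of $d$; your closure-of-the-graph sits inside this set and works equally well, but the paper's choice makes the crucial inequality $d(x,y)\le t$ hold by definition rather than as a derived fact. (You also state this inequality once in the wrong direction --- ``$t\le d(x,y)$'' --- before correcting yourself.)

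The one place you go astray is the ``main obstacle'' you identify: controlling the ratio $t/d(x,y)$ from below. This is a non-issue. Since $t\ge d(x,y)$ everywhere on $K$, one has immediately
\[
\frac{|f(x)-f(y)|}{t}\ \le\ \frac{|f(x)-f(y)|}{d(x,y)},
\]
and the right-hand side tends to $0$ as $t\to 0$ (hence $d(x,y)\to 0$) because $f\in\lip(M)$. That single inequality is the paper's entire continuity argument at points with $t=0$. Your detour through approximating general points of $K$ by graph points and passing to a limit is not wrong, but it is unnecessary, and the worry about bounding $t/d(x,y)$ would send the proof in the wrong direction if pursued.
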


\begin{proof}
    The proof is a slight modification of the one of Theorem~6.2 in \cite{Kalton04}. Notice first that $V$ is countable, and $(M,\tau)\times (M,\tau)\times V$ is a countable compact space. 
    Furthermore, let $(x_n,y_n,t_n)_n$ be a sequence on $(M,\tau)\times (M,\tau)\times V$ converging to $(x,y,t)$, and satisfying $d(x_n,y_n)\leq t_n$ for every $n\in\natu$. Since $d$ is $\tau$-l.s.c., we have
    \begin{equation*}
        d(x,y) \leq \liminf_{n\to\infty} d(x_n,y_n) \leq \liminf_{n\to\infty} t_n = t,
    \end{equation*}
    so $(x,y,t)$ is also in \[K:=\{(x,y,t):(M,\tau)\times (M,\tau)\times V : d(x,y)\leq t\}.\]
    Thus, $K$ is a closed set within $(M,\tau)\times (M,\tau)\times V$, which means $K$ is compact and countable. 

    Now, we define $\phi \colon \lipC(M) \rightarrow C(K)$ by 
    \begin{equation*}
        \phi(f)(x,y,t) = \left\{\begin{array}{ccc}
             \frac{f(x)-f(y)}{t} & \text{if} & t\neq 0\\
              0& \text{if} &  t=0. 
         \end{array}\right.
    \end{equation*}
    Firstly, we have
    \begin{equation*}
        \frac{|f(x)-f(y)|}{t} \leq \frac{|f(x)-f(y)|}{d(x,y)} \underset{t\to 0}{\longrightarrow} 0,
    \end{equation*}
    where the limit is due to the fact that $f\in\lipC(M)$. Thus, $\phi(f)$ is continuous and $\phi$ is well defined. Moreover, $\phi$ is clearly linear and
    \begin{align*}
        \|\phi(f)\|_{\infty} &= \sup \left\{|\phi(f)(x,y,t)| \;:\; (x,y,t)\in K\right\} \\&=  \sup \left\{\frac{|f(x)-f(y)|}{t} \;:\; (x,y,t)\in K, \; t\neq 0\right\} \\&=  \sup \left\{\frac{|f(x)-f(y)|}{d(x,y)} \;:\; (x,y)\in M^2\setminus \Delta \right\} = \|f\|_L,
    \end{align*}
    since the third equality follows since $t\geq d(x,y)$ for $(x,y,t)\in K$, and $(x,y,d(x,y))\in K$. This proves that $\phi$ is an isometry.
\end{proof}

A combination of Theorem~\ref{thm:dualityfree} together with Propositions~\ref{prop:S0(M)} and \ref{prop:liptau(M)} yield the following result on projective norm-attainment.
\begin{coro}\label{coro:lipschitfreetodotensor}
    Let $M$ be a complete metric space. Then $\NA(\mathcal F(M)\pten Y)=\mathcal F(M)\pten Y$ for any Banach space $Y$ which is $1$-complemented in $Y^{**}$ if $M$ satisfies one of the following conditions:
    \begin{enumerate}
        \item[a)] $M$ is countable and proper.
        \item[b)] $M$ is uniformly discrete, countable, and there is a compact Hausdorff topology $\tau$ on $M$ such that $d$ is $\tau$-lower semicontinuous, and $V=\{d(x,y) \;:\; (x,y)\in M^2\}\subseteq \R_0^+$ is a compact set.
    \end{enumerate}
 
\end{coro}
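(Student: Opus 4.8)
The plan is to deduce both parts a) and b) from Corollary~\ref{cor:C(K)attains}. In each case we exhibit a Banach space $X$ together with a linear into isometry $\phi\colon X\to C(K)$ for some countable compact Hausdorff space $K$, in such a way that $X^*=\mathcal F(M)$ isometrically. Since a countable compact Hausdorff space is scattered --- a non-empty perfect closed subset of a compact Hausdorff space is uncountable by the Baire category theorem, so $K$ can contain none --- identifying $X$ with $\phi(X)\subseteq C(K)$ places us exactly in the setting of Corollary~\ref{cor:C(K)attains}. Hence, provided we also know that $X^*=\mathcal F(M)$ has the AP, that corollary yields $\NA(\mathcal F(M)\pten Y)=\mathcal F(M)\pten Y$ for every $Y$ that is $1$-complemented in $Y^{**}$. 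So in each case there are two things to settle: the construction of $X,K,\phi$, and the approximation property of $\mathcal F(M)$.

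For a), take $X=\lip(M)$. As $M$ is countable and proper, Theorem~\ref{thm:dualityfree}(a) gives $\lip(M)^*=\mathcal F(M)$ isometrically, and Proposition~\ref{prop:S0(M)} supplies the required countable compact $K$ together with $\phi$. For the approximation property we invoke the known fact that $\mathcal F(M)$ has the metric approximation property --- and in particular the AP --- whenever $M$ is a countable proper metric space \cite{Dalet05}. Corollary~\ref{cor:C(K)attains} then gives the conclusion.

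For b), take $X=\lipC(M)$. Compactness of $V$ forces $\diam(M)=\sup V<\infty$, so $M$ is a countable (hence separable), bounded, uniformly discrete metric space carrying a compact Hausdorff topology $\tau$ for which $d$ is lower semicontinuous; by the remark following Theorem~\ref{thm:dualityfree} (through \cite[Corollary~3.9]{GLPPRZ}) the hypotheses of Theorem~\ref{thm:dualityfree}(b) hold, so $\lipC(M)^*=\mathcal F(M)$ isometrically, and Proposition~\ref{prop:liptau(M)} provides $K$ and $\phi$. This time the AP of $\mathcal F(M)$ is elementary: when $M$ is uniformly discrete and bounded the normalised evaluations $(\delta(x)/d(x,0))_{x\in M\setminus\{0\}}$ are equivalent to the canonical basis of $\ell_1(M\setminus\{0\})$ --- the upper estimate is the triangle inequality, and the lower one comes from testing against the Lipschitz function $x\mapsto\varepsilon_x d(x,0)$ with suitable signs $\varepsilon_x\in\{-1,1\}$ --- so $\mathcal F(M)$ is isomorphic to $\ell_1(M\setminus\{0\})$ and therefore has the AP. Corollary~\ref{cor:C(K)attains} applies once more.

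The only input that is not pure bookkeeping is the AP of $\mathcal F(M)$ in part a): a countable proper metric space need not be uniformly discrete (a convergent sequence together with its limit already fails this), so the $\ell_1$-argument of part b) is unavailable there and one has to rely on Dalet's approximation theorem. Everything else --- matching the hypotheses of Theorem~\ref{thm:dualityfree} and of Propositions~\ref{prop:S0(M)} and \ref{prop:liptau(M)}, and the observation that countable compact Hausdorff spaces are scattered --- is routine.
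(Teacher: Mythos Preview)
Your proof is correct and follows the same route as the paper: in each case you identify the predual $X$ ($\lip(M)$ or $\lipC(M)$), embed it into $C(K)$ for a countable compact $K$ via Propositions~\ref{prop:S0(M)} and~\ref{prop:liptau(M)}, note that countable compact implies scattered, and apply Corollary~\ref{cor:C(K)attains}. The only deviation is in part~b), where the paper quotes \cite[Theorem~4.4]{Kalton04} for the approximation property of $\mathcal F(M)$, whereas you give the direct argument that a uniformly discrete bounded $M$ yields $\mathcal F(M)\simeq\ell_1(M\setminus\{0\})$; your version is more self-contained, but the overall structure is identical.
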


\begin{proof}
In a), we have the duality $\mathcal F(M)=\lip(M)^*$, and $\mathcal F(M)$ has the approximation property \cite[Theorem 2.6]{Dalet05}, whereas in b) we have $\mathcal F(M)=\lipC(M)^*$ and $\mathcal F(M)$ has the approximation property \cite[Theorem 4.4]{Kalton04}. Now, the result follows from Propositions~\ref{prop:S0(M)} and \ref{prop:liptau(M)}, Corollary~\ref{cor:C(K)attains}, and the fact that any compact countable space is scattered. 
\end{proof}

Now let us show an example where the above Corollary applies.

\begin{ejem}\label{ex:freespaceexamppropio}
There are metric spaces $M$ for which b) in Corollary~\ref{coro:lipschitfreetodotensor} applies and which are not isometrically isomorphic to $\ell_1$. Indeed, define $M:=\{0\}\cup\{x_n:n\in\mathbb N\}\cup \{z\}$ whose distances are defined as $d(x_n,x_m)=d(x_n,0)=d(x_n,z)=1$ holds for every $n,m\in\mathbb N, n\neq m$ and $d(0,z)=2$.

First we claim that $\mathcal F(M)$ is not isometrically isomorphic to $\ell_1$. Indeed, an application of \cite[Theorem 4.3]{pr18} implies that the element $\frac{\delta(z)}{2}\in S_{\mathcal F(M)}$ is a point of Fr\'echet differentiability of the norm of $\mathcal F(M)$. However, it is a well-known fact that the norm of $\ell_1$ is nowhere Fr\'echet differentiable (c.f. e.g. \cite[Examples I.1.6, c)]{DGZ}). Consequently, $\ell_1$ and $\mathcal F(M)$ cannot be isometrically isomorphic.

On the other hand, let us prove that there exists a 
topology on $M$ which satisfies the assumptions of b) in Corollary~\ref{coro:lipschitfreetodotensor}. In order to do so, we declare all the points of $M$ to be discrete points with the exception of the point $x_1$. For $x_1$, we define the following the following family of sets
$$U_n:=\{x_k, k\geq n\}$$
to be a neighbourhood basis for the point $x_1$. It is immediate that the above defines a compact topology on $M$, and  
that the set $\{d(x,y): (x,y)\in M^2\}=\{0,1,2\}$ is a compact set.

Finally, let us prove that $d\colon (M,\tau)^2\longrightarrow \mathbb R$ is lower semicontinuous. In order to do so, select a convergent net $\{(x_s,y_s)\}\rightarrow (x,y)$ in $(M,\tau)^2$, and let us prove that $d(x,y)\leq \liminf d(x_s,y_s)$. Clearly we may assume $x\neq y$, so $x_s\neq y_s$ eventually and thus $\liminf d(x_s,y_s)\geq 1$. Therefore, the only case where the inequality may not hold is when $d(x,y)=2$, that is, $\{x,y\}=\{0,z\}$. But $0$ and $z$ are isolated in $(M,\tau)$, so in that case $\{x_s,y_s\}=\{0,z\}$ eventually. Thus $d(x,y)=\liminf d(x_s,y_s)$ and we are done. 
\end{ejem}

As far as we are concerned, the only known example of Banach space satisfying $\NA_\pi(X\pten Y)=X\pten Y$ for every Banach space $Y$ is $X=\ell_1(I)$ \cite[Proposition 3.6]{DJRRZ}. In the following result we investigate the above condition in the search for a necessary condition. Keeping in mind that, if a Banach space $X$ satisfies that $\NA_\pi(X\pten Y)=X\pten Y$  for every Banach space $Y$ then $\NA(X,Y^*)$ is dense in $\mathcal L(X,Y^*)$, we conclude that $X$ must enjoy a kind of Lindenstrauss property A for operators taking value on dual spaces. Because of that, we will make use of the ideas of \cite[Theorem 2]{lindenstrauss} to obtain the following result.

\begin{thm}\label{teo:idealindens} Let $X$ be a separable Banach space. Assume that  $\NA(X^*, Y^*)$ is dense in $\mathcal L(X^*, Y^*)$ for every Banach space $Y$. Then $B_{X^*}=\cconv(\exp B_{X^*})$.
\end{thm}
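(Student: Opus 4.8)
The plan is to adapt the argument from Lindenstrauss \cite[Theorem 2]{lindenstrauss}, which produces a necessary condition for property A, to the present setting by exploiting the identification $(X^*\pten Y)^* = \mathcal L(X^*,Y^*)$ together with the freedom to choose $Y$ cleverly. Suppose, towards a contradiction, that $B_{X^*}\neq \cconv(\exp B_{X^*})$. Then there is some $x_0^*\in S_{X^*}$ and a functional $x_0^{**}\in S_{X^{**}}$ together with $\alpha>0$ such that $x_0^{**}(x_0^*)\geq \alpha + \sup\{x_0^{**}(e) : e\in \cconv(\exp B_{X^*})\}$; equivalently, $x_0^*$ is separated from the closed convex hull of the exposed points of $B_{X^*}$ by a bidual functional.

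First I would fix a countable dense subset $(x_k)_k$ of $S_X$ (using separability of $X$) and build an auxiliary Banach space $Y$ together with a specific operator $T_0\in\mathcal L(X^*,Y^*)$ designed so that $T_0$ cannot be approximated by norm-attaining operators. Following the Lindenstrauss scheme, one takes $Y^*$ to be a suitable space (for instance a renorming of $c_0$ or $C(K)$-type space, or directly $Y^* = \ell_\infty$-type), and defines $T_0\colon X^*\to Y^*$ by combining the evaluation functionals against the $x_k$ with a strictly decreasing sequence of weights and a small perturbation term that depends on the separating functional $x_0^{**}$; the weights guarantee that $\|T_0 x^*\|$ is maximized only in directions that behave like exposed points. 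The key quantitative input is that if $T\in\mathcal L(X^*,Y^*)$ is close to $T_0$ and attains its norm at some $x^*\in S_{X^*}$, then by the convexity remark recorded just before Theorem~\ref{teo:idealindens} (the argument showing that norm-attaining tensors force $T(x_n)(y_n)=\|T\|$), the point $x^*$ must be an exposed point of $B_{X^*}$, or at least lie arbitrarily close to $\cconv(\exp B_{X^*})$; feeding this back into the estimate involving $x_0^{**}$ yields $\|T-T_0\|\geq \alpha/2$, say, contradicting density of $\NA(X^*,Y^*)$.

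The main obstacle, and the step requiring the most care, is the explicit construction of $T_0$ and the verification that its norm-attainment set is confined to (a neighbourhood of) the exposed points: in Lindenstrauss's original argument this is where separability of $X$ and a careful diagonal/weighting construction enter, and one has to be sure the weak$^*$ topology on $B_{X^*}$ and the role of the bidual functional $x_0^{**}$ are handled correctly, since $x_0^{**}$ is only weak$^*$ continuous on bounded sets of $X^{**}$, not on $X^*$. I would therefore spend most of the effort making precise the claim ``$T$ near $T_0$ and norm-attaining $\Rightarrow$ attained at a point near $\cconv(\exp B_{X^*})$'', isolating it as a lemma, and then the contradiction with density is a short computation. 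A secondary technical point is ensuring the target space $Y^*$ is genuinely a dual space (so that $T_0\in\mathcal L(X^*,Y^*)=(X^*\pten Y)^*$ and the density hypothesis applies); choosing $Y$ so that $Y^*$ is a $\mathcal C(K)$ space for compact $K$, or an $\ell_\infty$-sum, takes care of this. Once these ingredients are in place, the proof concludes exactly as in \cite{lindenstrauss}: the separation failure forces an open ball around $T_0$ disjoint from $\NA(X^*,Y^*)$, contradicting the hypothesis.
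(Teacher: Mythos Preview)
Your overall architecture---assume $B_{X^*}\neq\cconv(\exp B_{X^*})$, separate by some $\varphi\in S_{X^{**}}$, build a specific operator that cannot be approximated by norm-attaining ones---matches the paper. However, the heart of the argument, namely the construction of the operator and the mechanism forcing ``norm-attaining near it $\Rightarrow$ attained at an exposed point'', is missing from your proposal, and the hint you give for it is off target. The convexity remark preceding the theorem says only that a functional norming an element of $\NA_\pi(X^*\pten Y)$ attains its operator norm; it gives no information whatsoever about \emph{where} the norm is attained, let alone that the attainment point is exposed. Your weighted-evaluation construction into an $\ell_\infty$-type target is closer in spirit to Lindenstrauss's Theorem~1 (which yields \emph{extreme} points), and it is not clear how you would upgrade this to \emph{exposed} points without a genuinely new idea.

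The paper's construction is both different and considerably simpler than what you sketch. Separability of $X$ is used not to enumerate a dense set in $S_X$, but to renorm $X$ so that $(X^*,\renorm{\cdot})$ is strictly convex with $\renorm{\cdot}\leq\norm{\cdot}$. One then takes $Y=(X,\renorm{\cdot})\oplus_2\mathbb R$, so that $Y^*=(X^*,\renorm{\cdot})\oplus_2\mathbb R$ is strictly convex, and defines $V\colon X^*\to Y^*$ by $V(x^*)=(x^*,M\varphi(x^*))$ for large $M$. This $V$ is an isomorphism into, hence so is any nearby $G$; if $G$ attains its norm at $x_0^*$ and $y^{**}\in S_{Y^{**}}$ norms $G(x_0^*)$, then strict convexity of $Y^*$ together with injectivity of $G$ force the functional $y^{**}\circ G/\norm{y^{**}\circ G}\in S_{X^{**}}$ to \emph{expose} $x_0^*$. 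A direct norm estimate then yields the contradiction. So the ``exposed'' conclusion comes for free from strict convexity plus injectivity, not from any delicate weighting scheme.

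One minor point: your worry that ``$x_0^{**}$ is only weak$^*$ continuous on bounded sets of $X^{**}$, not on $X^*$'' is misplaced; $\varphi\in X^{**}$ is a bounded linear functional on $X^*$ and hence norm-continuous there, which is all that is needed.
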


\begin{proof}
    Suppose that $B_{X^*}\neq\cconv(\exp B_{X^*})$. By Hahn-Banach theorem and observing that $\exp B_{X^*}$ is symmetric, we can find some $\varphi\in S_{X^{**}}$ and $\delta >0$ such that 
    \begin{equation*}
        |\varphi(x^*)|\leq 1-\delta,\quad \forall x^*\in \exp B_{X^*}.
    \end{equation*}
    Since $X$ is separable, it admits a renorming $\renorm{\cdot}$ such that $\renorm{x^*}\leq \norm{x^*}$ for every $x^*\in X^*$ and $(X^*,\renorm{\cdot})$ is strictly convex (\cite[Theorem II.2.6.(i)]{DGZ}). Defining $Y=X\oplus_2 \R$, it follows that $Y^*= X^*\oplus_2\R$. Given $M>\frac{2}{\delta}$, we consider the operator $V\colon X^*\rightarrow Y^*$ with $V(x^*)\coloneqq (x^*, M\varphi(x^*))$ for each $x^*\in X^*$. Then, $V$ is an isomorphism (into) and
    \begin{equation*}
        \norm{V}\geq M, \quad \norm{V(x^*)}\leq M-1 \text{ for each } x^*\in \exp B_{X^*}.
    \end{equation*}
    It is easy to see that if $G\in \mathcal L(X^*, Y^*)$ and $\norm{G-V}\leq \eta$, for $\eta=\frac{1}{2\norm{V^{-1}}}$, then $G$ is also an isomorphism (into). Therefore, since  $\NA(X^*, Y^*)$ is dense in $\mathcal L(X^*, Y^*)$, we can take $G\in \mathcal L(X^*, Y^*)$ with $\norm{G-V}<\min\left\{\frac{1}{2},\eta\right\}$ and such that $G(x_0^*)=\norm{G}$ for some $x_0^*\in S_{X^*}$.

    Now, we claim that the functional $x_0^*$ is exposed in $B_{X^*}$. Indeed, by Hahn-Banach theorem, there is $y^*\in S_{Y^{**}}$ such that $y^*(G(x_0^*))= \norm{G}$. Then, we define $$z \coloneqq \frac{y^*\circ G}{\norm{y^*\circ G}}\in S_{X^{**}}, $$
        and we are going to prove that $z$ exposes $x_0^*$. Suppose that there is another $v^*\in S_{X^*}$ such that $z(v^*)=1$. Therefore, $y^*(G(v^*))=\norm{G}$ and 
        \begin{equation*}
            \norm{G(x_0^*)+G(v^*)} \geq y^*(G(x_0^*))+ y^*(G(v^*)) = \norm{G}+\norm{G} = \norm{G(x_0^*)}+\norm{G(v^*)}.
        \end{equation*}
        Noticing that $Y^*$ is strictly convex, we conclude $G(x_0^*) = G(v^*)$. Finally, $x_0^*=v^*$ because $G$ is injective. This proves the claim, that is, $x_0\in \exp B_{X^*}$.

    Finally, we have that
    \begin{align*}
        M-1&\geq \norm{V(x_0^*)} \geq \norm{G(x_0^*)}-\norm{G-V} = \norm{G}-\norm{G-V} \\&\geq \norm{V}-2\norm{G-V} > M-1,
    \end{align*}
    which is a contradiction. Thus, it must be $B_{X^*}=\cconv(\exp B_{X^*})$.
\end{proof}

As we have mentioned above, if a Banach space $X$ satisfies that\\ $\NA_\pi(X\pten Y)=X\pten Y$ then $\NA(X,Y^*)$ must be dense in $\mathcal L(X,Y^*)$. Hence, the following is a direct consequence of Theorem~\ref{teo:idealindens}.

\begin{coro}\label{cor:propAnormatensor} Let $X$ be a separable Banach space. Assume that\\ $\NA(X^*\pten Y)=X^*\pten Y$ for any Banach space $Y$. Then $B_{X^*}=\cconv(\exp B_{X^*})$.
\end{coro}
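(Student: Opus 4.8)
The plan is to derive Corollary~\ref{cor:propAnormatensor} as an immediate consequence of Theorem~\ref{teo:idealindens}, so the only work is to verify that the hypothesis of the corollary implies the hypothesis of the theorem. Concretely, given a separable Banach space $X$ with $\NA_\pi(X^*\pten Y)=X^*\pten Y$ for every Banach space $Y$, I want to conclude that $\NA(X^*,Y^*)$ is dense in $\mathcal L(X^*,Y^*)$ for every Banach space $Y$, and then invoke Theorem~\ref{teo:idealindens} verbatim to obtain $B_{X^*}=\cconv(\exp B_{X^*})$.

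First I would recall the isometric identification $(X^*\pten Y)^*=\mathcal L(X^*,Y^*)$ from Subsection~\ref{subsec:normattaintensor}. Next, the key observation — already noted in the paragraph of the excerpt following Subsection~\ref{subsec:normattaintensor} and again just before the statement of the corollary — is that if $z\in\NA_\pi(X^*\pten Y)$ and $T\in\mathcal L(X^*,Y^*)$ satisfies $\Vert T\Vert=1=T(z)$, then writing $z=\sum_n\lambda_n x_n^*\otimes y_n$ with $x_n^*\in S_{X^*}$, $y_n\in S_Y$, $\lambda_n>0$, $\sum_n\lambda_n=\Vert z\Vert$, a convexity argument forces $T(x_n^*)(y_n)=\Vert T(x_n^*)\Vert=\Vert T\Vert=1$ for all $n$; in particular $T$ attains its norm at $x_1^*\in S_{X^*}$, so $T\in\NA(X^*,Y^*)$. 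Thus the hypothesis $\NA_\pi(X^*\pten Y)=X^*\pten Y$ says precisely that \emph{every} norm-one functional on $X^*\pten Y$ that attains its norm on $X^*\pten Y$ lies in $\NA(X^*,Y^*)$. Since on any Banach space the set of norm-attaining functionals is norming (Bishop--Phelps), and more relevantly dense in the dual by the Bishop--Phelps theorem, it follows that $\NA(X^*,Y^*)$ is dense in $\mathcal L(X^*,Y^*)=(X^*\pten Y)^*$.

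To spell out that last step: by the Bishop--Phelps theorem, the set of functionals on the Banach space $X^*\pten Y$ that attain their norm is norm-dense in $(X^*\pten Y)^*$; by the previous paragraph this set is contained in $\NA(X^*,Y^*)$ (after identifying $(X^*\pten Y)^*$ with $\mathcal L(X^*,Y^*)$), hence $\NA(X^*,Y^*)$ is itself dense in $\mathcal L(X^*,Y^*)$. This holds for every Banach space $Y$, so Theorem~\ref{teo:idealindens} applies with the present $X$, and we conclude $B_{X^*}=\cconv(\exp B_{X^*})$, which is exactly the assertion of Corollary~\ref{cor:propAnormatensor}.

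I do not expect any genuine obstacle here: the corollary is designed to be a one-line deduction from Theorem~\ref{teo:idealindens}. The only point requiring a modicum of care is making the implication ``$z$ attains its projective norm and $T$ attains $\Vert z\Vert$ $\Rightarrow$ $T$ attains its operator norm'' explicit, but this is precisely the convexity computation already recorded in Subsection~\ref{subsec:normattaintensor} of the excerpt, so it can simply be cited rather than reproved. One should also note that Theorem~\ref{teo:idealindens} is stated for $\NA(X^*,Y^*)$ dense in $\mathcal L(X^*,Y^*)$, which matches exactly what we extract, so no reindexing or change of the roles of the spaces is needed.
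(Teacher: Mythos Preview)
Your proposal is correct and follows exactly the route the paper takes: the corollary is deduced in one line from Theorem~\ref{teo:idealindens}, using the observation (recorded in Subsection~\ref{subsec:normattaintensor}) that a functional attaining its norm at a tensor which attains its projective norm must be a norm-attaining operator, together with Bishop--Phelps to guarantee density. The only difference is that you spell out the Bishop--Phelps step explicitly, whereas the paper leaves it implicit in the sentence preceding the corollary.
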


In \cite[Corollary 3.11]{DJRRZ} it is proved that a necessary condition to get that $X\pten Y\setminus \NA_\pi(X\pten Y)\neq\emptyset$ is to require that $\NA(X,Y^*)$ is not dense in $\mathcal L(X,Y^*)$. We will end the section by showing that the connection between the size of the sets $X\pten Y\setminus \NA_\pi(X\pten Y)$ and $\mathcal L(X,Y^*)\setminus\NA(X,Y^*)$ goes beyond and, if we require the non-density of $\operatorname{span}(\NA(X,Y^*))$, we get that $X\pten Y\setminus \NA_\pi(X\pten Y)$ not only is non-empty but also it is large enough to generate the unit ball of $B_{X\pten Y}$ by taking closed convex hull.

Let us start with a result of theoretical nature which establishes a sufficient condition in order to $X\pten Y\setminus\NA_\pi(X\pten Y)$ be norming for $\mathcal L(X,Y^*)$.

\begin{thm}\label{theo:complebig}
Let $X, Y$ be Banach spaces satisfying that the set $Z:=$\\$\NA(X,Y^*)-\NA(X,Y^*)$ is not dense in $\mathcal L(X,Y^*)$. 
Then,
$$B_{X\pten Y}=\overline{\co}(\{z\in X\pten Y\setminus\NA_\pi(X\pten Y): \Vert z\Vert\leq 1\}).$$
\end{thm}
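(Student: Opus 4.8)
The plan is to establish the contrapositive-flavored statement by producing, for each functional $T \in \mathcal{L}(X,Y^*) = (X \pten Y)^*$ with $\|T\| = 1$, an element $z \in B_{X\pten Y}$ that does not attain its projective norm and on which $T$ is nearly $1$; this shows that the closed convex hull of the non-attaining elements of the ball is weak$^*$-dense-supporting enough to recover all of $B_{X\pten Y}$ by Hahn-Banach. First I would record the hypothesis in usable form: since $Z = \NA(X,Y^*) - \NA(X,Y^*)$ is not dense in $\mathcal{L}(X,Y^*)$, there is an operator $T_0 \in \mathcal{L}(X,Y^*)$ and $\rho > 0$ with $\dist(T_0, Z) \geq \rho$; in particular, no operator within distance $\rho$ of $T_0$ lies in $\NA(X,Y^*)$, and moreover $T_0$ itself can be perturbed slightly without leaving the complement of $\overline{Z}$.

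Next I would fix an arbitrary $S \in S_{(X\pten Y)^*} = S_{\mathcal{L}(X,Y^*)}$ and $\varepsilon \in (0,1)$, and seek $z \in B_{X\pten Y}$ with $S(z) > 1 - \varepsilon$ and $z \notin \NA_\pi(X \pten Y)$. The key idea is that an attaining tensor $z = \sum_n \lambda_n x_n \otimes y_n$ (with $x_n \in S_X$, $y_n \in S_Y$, $\lambda_n > 0$, $\sum \lambda_n = \|z\|$) forces any norming functional $G \in (X\pten Y)^*$ for $z$ to satisfy $G(x_n)(y_n) = \|G(x_n)\| = \|G\|$ for all $n$ — as observed in Subsection~\ref{subsec:normattaintensor} — so that $G$ attains its norm as an operator. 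Thus if $z$ attains its projective norm, \emph{every} norming operator for $z$ lies in $\NA(X,Y^*)$. The strategy is therefore to choose $z$ so that $S$ itself fails to be norming while $T_0$ (or a small perturbation) \emph{is} norming for $z$: since $T_0$ is at distance $\geq \rho$ from $\NA(X,Y^*)$, such a $z$ cannot be norm-attaining. Concretely, I would pick $x \in S_X$, $y \in S_Y$ with $S(x)(y) > 1 - \varepsilon/2$, and then look for a direction in which to tilt: choose $w \in X$, $v \in Y$ small so that the tensor $z := (1-t)\, x \otimes y + t\,(\text{something adapted to } T_0)$ has $T_0$ as a genuine norming operator — using the fact that one can always find $u = \sum x'_n \otimes y'_n \in B_{X\pten Y}$ with $T_0(u) = \|T_0\|$ approximately and with the $x'_n \otimes y'_n$ lying in the "norming faces" of $T_0$, and then the convex combination still nearly maximizes $S$ while forcing a norming functional that is $\rho$-far from attaining.

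The main obstacle I anticipate is the bookkeeping that links "$S(z)$ close to $1$" with "$z$ has a norming operator near $T_0$": one must arrange the convex combination so that the small $T_0$-adapted part genuinely controls which functionals norm $z$, not merely that $T_0$ nearly norms $z$. I would handle this by working with a suitable perturbation $T_\eta := T_0 + \eta\, S'$ (where $S'$ is chosen from the norming data of the $x \otimes y$ piece) and exploiting that $\dist(T_0, \overline{Z}) \geq \rho$ is an open condition, so $T_\eta$ still lies at distance $> \rho/2$ from $\NA(X,Y^*)$ for small $\eta$; then any norming operator for the combined $z$ is a convex-type combination forced to coincide (by the rigidity of attaining tensors recalled above) with $T_\eta$ on the relevant basic tensors, which is impossible. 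After producing such $z$ for every $S \in S_{(X\pten Y)^*}$ and every $\varepsilon > 0$, a standard Hahn-Banach separation argument gives $B_{X\pten Y} = \overline{\co}(\{z \in B_{X\pten Y} : z \notin \NA_\pi(X\pten Y)\})$, since any closed convex symmetric set supporting every unit functional up to $\varepsilon$ must be the whole ball.
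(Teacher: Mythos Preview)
Your overall framing is right: it suffices, for each norm-one $G\in\mathcal L(X,Y^*)$ and each $\varepsilon>0$, to find $z\in B_{X\pten Y}\setminus\NA_\pi(X\pten Y)$ with $G(z)>1-\varepsilon$, and the way to certify $z\notin\NA_\pi(X\pten Y)$ is indeed to exhibit a single norming functional for $z$ that does not attain its norm as an operator. The gap is in how you manufacture such a $z$. Your ``tilting'' construction $z=(1-t)\,x\otimes y+t\,u$ is supposed to force $T_0$ (or some $T_\eta$ near it) to \emph{genuinely} norm $z$, but nothing in the argument delivers this: since $T_0$ is far from $\NA(X,Y^*)$, there is no basic tensor $x'\otimes y'$ with $T_0(x')(y')=\Vert T_0\Vert$, and $T_0$ may well fail to attain its norm as a functional on $X\pten Y$ altogether, so there need not exist \emph{any} $z$ with $T_0(z)=\Vert T_0\Vert\Vert z\Vert$. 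The subsequent claim that ``any norming operator for the combined $z$ is forced to coincide with $T_\eta$ on the relevant basic tensors'' is not justified either; the rigidity fact you quote only says that a norming operator must satisfy $G(x_n)(y_n)=1$ on an optimal decomposition of $z$, it does not pin $G$ down near $T_\eta$.

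The missing ingredient is the Bishop--Phelps theorem applied to the Banach space $X\pten Y$. The paper first shows a preliminary step you skip: using that $Z=\NA(X,Y^*)-\NA(X,Y^*)$ is homogeneous and not dense, one proves that $\NA(X,Y^*)$ is \emph{nowhere dense} in $\mathcal L(X,Y^*)$ (given any $U\in\NA(X,Y^*)$, the operator $U+\tfrac{\varepsilon}{2}T_0$ is $\varepsilon$-close to $U$ but at positive distance from $\NA(X,Y^*)$, since $(U-W)\in Z$ for every $W\in\NA(X,Y^*)$). Hence near the given $G$ there is some $V$ with $\delta:=d(V,\NA(X,Y^*))>0$. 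Now Bishop--Phelps furnishes an operator $S$ with $\Vert V-S\Vert<\min\{\delta/2,\varepsilon/4\}$ that \emph{does} attain its norm as a functional on $X\pten Y$, say at $z\in S_{X\pten Y}$. Then $S\notin\NA(X,Y^*)$ (it is still at distance $\geq\delta/2$), so $z\notin\NA_\pi(X\pten Y)$, while $G(z)\geq \Vert S\Vert-2\Vert G-S\Vert>1-\varepsilon$. This replaces your ad hoc tilting by a one-line application of Bishop--Phelps, which is exactly what guarantees the existence of the required $z$.
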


\begin{proof} We will show first that $\NA(X, Y^*)$ is nowhere dense in $\mathcal L(X,Y^*)$. To this end, let $G\in \overline{\NA(X,Y^*)}$ and $\varepsilon>0$, and take $U\in \NA(X,Y^*)$ with 
$$\Vert G-U\Vert<\frac{\varepsilon}{2}.$$
Now let $T\in \mathcal L(X,Y^*)$ be such that $d(T,Z)>0$ and $\norm{T}=1$, and set $V:=U+\frac{\varepsilon}{2} T$. We have $\Vert G-V\Vert<\varepsilon$. Also, given any $W\in \NA(X,Y^*)$, we have that $U,W\in \NA(X,Y^*)$ and we infer that
$$\norm{V-W}=\left\Vert (U-W)+\frac{\varepsilon}{2}T\right\Vert=\frac{\varepsilon}{2}\left\Vert\frac{W-U}{\varepsilon/2}-T\right\Vert\geq\frac{\varepsilon}{2}d(T,Z)>0$$
since $\frac{U-W}{\varepsilon/2}\in Z$ since the set $Z$ is clearly closed under scalar multiplication operator. Consequently we have that $V$ is an operator with $\Vert G-V\Vert<\varepsilon$ and $d(V,\NA(X,Y^*))>0$. This shows that $\overline{\NA(X,Y^*)}\subset \overline{\mathcal L(X, Y^*)\setminus \overline{\NA(X,Y^*)}}$, so indeed we have
\[ \mathcal L(X, Y^*)\subset \overline{\mathcal L(X, Y^*)\setminus \overline{\NA(X,Y^*)}}.\]

Now, write $A:=\{z\in X\pten Y\setminus\NA_\pi(X\pten Y): \Vert z\Vert\leq 1\}$. In order to prove that $B_{X\pten Y}:=\overline{\co}(A)$ we need to check, by Hahn-Banach theorem, that 
$$\Vert G\Vert=\sup_{z\in A} G(z) \quad \forall G\in \mathcal L(X,Y^*).$$
To this end, let $G\in \mathcal L(X,Y^*)$ with $\Vert G\Vert=1$ and $\varepsilon>0$, and let us find $z\in A$ with $G(z)>1-\varepsilon$. 
By the first part of the proof, there is $V\colon X\to Y^*$ such that $\delta:=d(V, \NA(X, Y^*))>0$ and $\norm{G-V}<\frac{\varepsilon}{4}$. By Bishop-Phelps theorem, we can find $S\in  \mathcal L(X,Y^*)$ and $z\in S_{X\pten Y}$ with $S(z)=\Vert S\Vert$, such that $\Vert V-S\Vert<\min\{\frac{\delta}{2}, \frac{\varepsilon}{4}\}$. Thus,  $d(S,\NA(X,Y^*))\geq \delta/2 >0$. Since $S$ does not attain its norm as operator we derive that $z\notin \NA_\pi(X\pten Y)$. On the other hand, we get
\begin{align*}
    G(z)&\geq S(z)-\Vert G-S\Vert  =\Vert S\Vert-\Vert G-S\Vert \\ 
& \geq \Vert G\Vert-2\Vert G-S\Vert
 =1-2(\Vert G-V\Vert+\norm{V-S})\\
& >1-\varepsilon,
\end{align*}
which completes the proof.
\end{proof}

Before providing examples where the above result applies, a remark is pertinent about the assumptions made there.

\begin{rema}\label{rem:na-nadenso}
The denseness of $\NA(X,Y^*)-\NA(X,Y^*)$ may happen frequently. Indeed, it is known that if $\NA(X,Y^*)$ is a residual set (i.e. if it contains a $G_\delta$ dense set) then $\mathcal L(X,Y^*)=\NA(X,Y^*)-\NA(X,Y^*)$ (c.f. e.g. \cite[Proposition 1.7]{jmr23}). A big class of examples of Banach spaces $X$ and $Y$ for which $\NA(X,Y^*)$ is residual is exhibited in \cite{jmr23}. This turns out to happen, for instance, if $X$ has the RNP or property $\alpha$ and $Y$ is any Banach space \cite[Proposition 1.1 and Lemma 1.2]{jmr23}.

This shows in particular that, in order to apply Theorem~\ref{theo:complebig}, we must first avoid such kind of hypotheses.
\end{rema}

Now we can find examples where the above theorem applies.

\begin{ejem}\label{exam:fovelle}
Let $Y$ be a Banach space such that $Y^*$ is asymptotically midpoint uniformly convex (AMUC) and satisfies one of the following conditions:
\begin{itemize}
    \item $Y^*$ has a normalized, symmetric basic sequence which is not equivalent to the unit vector basis in $\ell_1$,
    \item $Y^*$ has a normalized sequence with upper $p$-estimates for some $1<p<\infty$.
\end{itemize}
In the proof of \cite[Theorem A]{fovelle} it is proved that there exists a Banach space $X$ (indeed, a predual of a Lorentz sequence space $d_*(w)$) with a Schauder basis $\{e_n\}$ with the following properties:
\begin{enumerate}
    \item $\NA(X,Y^*)\subseteq V:=\{G\in \mathcal L(X,Y^*): \lim_n \Vert G(e_n)\Vert=0\}$ \cite[Proposition 3]{fovelle}. In particular, $V$ contains $\NA(X,Y^*)-\NA(X,Y^*)$ since $V$ is a vector space.
    \item There exists $T\in S_{\mathcal L(X,Y^*)}$ such that $\Vert T(e_n)\Vert=1$ holds for every $n\in\mathbb N$ \cite[Propositions~4 and 6]{fovelle}. In particular $d(T,V)=1$.
\end{enumerate}
In particular, under the conditions exposed above, Theorem~\ref{theo:complebig} applies.

We thank Miguel Mart\'in for letting us know about the existence of the paper \cite{fovelle}.
\end{ejem}

\begin{rema}
In \cite[Example 3.12, c)]{DJRRZ} it is proved that there exists a Banach space $G$ for which there are elements in $G\pten \ell_p$, $1<p<\infty$ which do not attain their projective norm. Observe that Example~\ref{exam:fovelle} applies for $Y=\ell_p$, from where we deduce that, for the space $X$ described in Example~\ref{exam:fovelle}, the set $X\pten Y\setminus\NA_\pi(X\pten Y)$ is norming for the dual.

Observe that, since $X$ has a monotone Schauder basis, both $X$ and $Y$ have the metric $\pi$-property, so in particular $\NA_\pi(X\pten Y)$ is dense in $X\pten Y$.
\end{rema}

\begin{rema}\label{remark:bignaandcomple}
In \cite[Section 3]{rueda23} it is proved that if $X$ is a Banach space with the metric $\pi$-property whose norm depends upon finitely many coordinates and $Y$ is a Hilbert space, then both $\NA_\pi(X\pten Y)$ and $X\pten Y\setminus\NA_\pi(X\pten Y)$ are dense sets in $X\pten Y$.

As far as we are concerned, the above is the first (and the single) example of projective tensor product where $\NA_\pi$ and its complement are both big sets.

Observe that Example~\ref{exam:fovelle} enlarges the class of examples where such behaviour happens. Indeed, the Banach space $X$ described there not only has the metric $\pi$-property but also a monotone Schauder basis. Hence, if we take $Y$ as in Example~\ref{exam:fovelle} with the metric $\pi$-property, we get that $\NA_\pi(X\pten Y)$ is dense in $X\pten Y$ and $B_{X\pten Y}=\overline{\co}(\{z\in S_{X\pten Y}: z\notin \NA_\pi(X\pten Y)\})$.
\end{rema}

\section{Denseness of norm attaining tensors}\label{sect:densidad}

In this section we aim to obtain new results about density of the set of tensors that attain its projective norm. The improvement will be done in two different directions: on the one hand, Theorem~\ref{th:general} below will establish density on $X\pten Y$ making assumptions only on one of the factors (say $X$); on the other hand, from an application of Lemma~\ref{lema:complementeddensity}, which will allow us to replace assumptions of $Y$ being a dual space with the assumption that $Y$ is $1$-complemented in its bidual.

Note first that the proof of Lemma \ref{lema:complemented} also yields  that if $\NA_\pi(X\pten Y)$ is dense in $X\pten Y$ then $\NA_\pi(X\pten Z)$ is dense in $X\pten Z$. Indeed, if we consider $z\in X\pten Z\subseteq X\pten Y$ and $\varepsilon>0$ then, using the density of $\NA_\pi(X\pten Y)$ we can find $z'\in \NA_\pi(X\pten Y)$ with $\Vert z-z'\Vert_{X\pten Y}<\varepsilon$. As in the proof of Lemma~\ref{lema:complemented} we can prove that $(Id\otimes P)(z')\in \NA_\pi(X\pten Z)$. On the other hand, since $z\in X\pten Z$ we derive $z=(Id\otimes P)(z)$, so
$$\Vert z-(Id\otimes P)(z')\Vert_{X\pten Z}=\Vert (Id\otimes P)(z-z')\Vert_{X\pten Z}\leq \Vert Id\otimes P\Vert \Vert z-z'\Vert_{X\pten Y}<\varepsilon.$$
Consequently, the following lemma also does hold.

\begin{lema}\label{lema:complementeddensity} Let $X,Y$ be Banach spaces such that $\overline{\NA_\pi(X\pten Y)}=X\pten Y$. Assume $Z\subset Y$ is a $1$-complemented subspace.  Then, $\overline{\NA_\pi(X\pten Z)}=X\pten Z$.
\end{lema}

Now, we will prove the following abstract result from where we will obtain all our improvements.

\begin{thm}\label{th:general}
    Let $X,Y$ be Banach spaces. Suppose that for every $\varepsilon>0$, $n\in\natu$ and $x_1,\ldots,x_n \in X$, there exists a finite dimensional subspace $Z\subseteq X$ which is 1-complemented in $X$ and such that we can find $x_i'\in Z$ with $\|x_i-x_i'\|<\varepsilon$ for each $i=1,\ldots,n$. Assume that $\overline{\NA_\pi(Z\ptensor Y)}=Z\ptensor Y$, for the previous $Z$. Then,
    \begin{equation*}
        \overline{\NA_\pi(X\ptensor Y)}=X\ptensor Y.
    \end{equation*}
\end{thm}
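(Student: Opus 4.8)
The plan is to approximate an arbitrary $z \in X \ptensor Y$ by elements of $\NA_\pi(X \ptensor Y)$ by first pushing $z$ into a subspace $Z \ptensor Y$ where norm-attainment density is assumed, and then transferring the good representation back to $X$ via the norm-one projection. First I would fix $z \in X \ptensor Y$ and $\varepsilon > 0$, and write $z = \sum_{n=1}^\infty u_n \otimes v_n$ with $\sum_{n=1}^\infty \|u_n\|\|v_n\| < \|z\|_\pi + \varepsilon$; by splitting the tail, there is $N$ with $\sum_{n > N} \|u_n\|\|v_n\| < \varepsilon$, so $z$ is within $\varepsilon$ of the finite sum $z_0 = \sum_{n=1}^N u_n \otimes v_n$. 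Now apply the hypothesis to the finite set $\{u_1, \dots, u_N\}$ (after normalizing, or directly) to obtain a finite-dimensional $1$-complemented subspace $Z \subseteq X$ with projection $P \colon X \to Z$, $\|P\| = 1$, and vectors $u_n' \in Z$ with $\|u_n - u_n'\|$ as small as we like for $n = 1, \dots, N$. Setting $z_1 = \sum_{n=1}^N u_n' \otimes v_n \in Z \ptensor Y$, the estimate $\|z_0 - z_1\|_\pi \le \sum_{n=1}^N \|u_n - u_n'\|\,\|v_n\|$ shows $z_1$ is as close to $z_0$ as we wish, hence $z$ is within, say, $2\varepsilon$ of $z_1$.

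Next I would invoke the assumption $\overline{\NA_\pi(Z \ptensor Y)} = Z \ptensor Y$ to find $w \in \NA_\pi(Z \ptensor Y)$ with $\|z_1 - w\|_{Z \ptensor Y} < \varepsilon$, so $w = \sum_{k=1}^\infty a_k \otimes b_k$ with $a_k \in Z$, $b_k \in Y$, and $\|w\|_{Z\ptensor Y} = \sum_{k=1}^\infty \|a_k\|\|b_k\|$. The key point is that $Z$ is $1$-complemented in $X$, so the inclusion $j \colon Z \hookrightarrow X$ is a $1$-complemented isometric embedding; this means $\|j(a)\|_X = \|a\|_Z$ for $a \in Z$ and, crucially, the induced map $j \otimes Id \colon Z \ptensor Y \to X \ptensor Y$ is an isometric embedding onto a $1$-complemented subspace (this is exactly the fact recalled in the preliminaries, via $T \otimes S$ for projections $T, S$). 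Therefore $\|w\|_{X \ptensor Y} = \|w\|_{Z \ptensor Y} = \sum_{k=1}^\infty \|a_k\|\|b_k\|$, and since $\|a_k\|_X = \|a_k\|_Z$, the same representation witnesses $w \in \NA_\pi(X \ptensor Y)$. Viewing everything inside $X \ptensor Y$ and combining the estimates, $\|z - w\|_{X \ptensor Y} < 3\varepsilon$ (after renaming $\varepsilon$), which proves density.

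The only subtle point — and the step I would write out most carefully — is the claim that $j \otimes Id$ is an \emph{isometry}, not just a norm-one map, when $Z$ is merely $1$-complemented in $X$ rather than sitting inside via an injective tensor construction. The clean way to see it: if $P \colon X \to Z$ is the norm-one projection, then $P \otimes Id \colon X \ptensor Y \to Z \ptensor Y$ has norm $\le 1$ and satisfies $(P \otimes Id) \circ (j \otimes Id) = Id_{Z \ptensor Y}$, so for $w \in Z \ptensor Y$ we get $\|w\|_{Z \ptensor Y} = \|(P\otimes Id)(j \otimes Id)(w)\|_{Z \ptensor Y} \le \|(j \otimes Id)(w)\|_{X \ptensor Y} \le \|w\|_{Z \ptensor Y}$, forcing equality. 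This is precisely the content of \cite[Proposition 2.4]{ryan} cited earlier, so I would simply appeal to it. With that in hand the argument is a routine triangle-inequality bookkeeping, so no further obstacles arise.
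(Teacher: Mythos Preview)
Your proposal is correct and follows essentially the same approach as the paper: approximate by a finite sum, push it into a suitable $1$-complemented finite-dimensional $Z$, use density of $\NA_\pi(Z\ptensor Y)$, and then exploit that $j\otimes Id$ is isometric (via the norm-one projection $P\otimes Id$) to conclude the approximant lies in $\NA_\pi(X\ptensor Y)$. The only cosmetic difference is that the paper starts directly from an element of the algebraic tensor product (which is dense), whereas you first truncate an infinite representation; and the paper simply quotes the isometric embedding fact while you spell out its two-line proof.
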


\begin{proof}
    Take $z= \sum_{i=1}^n x_i\otimes y_i \in X\ptensor Y$, and $\varepsilon>0$. We may assume $\norm{y_i}=1$ for each $i$. By hypothesis, we can find $Z\subseteq X$ as in the statement, for $x_1,\ldots,x_n$ and $\varepsilon' = \frac{\varepsilon}{2n}$. Recall that, since $Z$ is 1-complemented, we have $\norm{u}_{X\pten Y}=\norm{u}_{Z\pten Y}$ for each $u\in Z\pten Y$. 
    \\We define $z'=\sum_{i=1}^n x_i'\otimes y_i$, which satisfies $\|z-z'\|\leq \sum_{i=1}^n \norm{x_i-x_i'}\norm{y_i} < \frac{\varepsilon}{2}$. Since $z'\in Z\ptensor Y = \overline{\NA_\pi(Z\ptensor Y)}$, there exists $z''=  \sum_{k=1}^{\infty} u_k\otimes v_k \in  Z\ptensor Y$ such that $\|z''\|= \sum_{k=1}^{\infty} \|u_k\|\|v_k\|$ and $\|z'-z''\|<\frac{\varepsilon}{2}$. Furthermore, since $Z$ is 1-complemented in $X$, we have that
    \begin{equation*}
        \|z''\|_{X\ptensor Y} = \|z''\|_{Z\ptensor Y} = \sum_{k=1}^{\infty} \|u_k\|\|v_k\|,
    \end{equation*}
    so $z''\in \NA_\pi(X\ptensor Y)$. Moreover,
    \begin{align*}
        \|z-z''\|_{X\ptensor Y}&\leq \|z-z'\|_{X\ptensor Y}+\|z'-z''\|_{X\ptensor Y}\\
        &=\|z-z'\|_{X\ptensor Y}+\|z'-z''\|_{Z\ptensor Y} < \frac{\varepsilon}{2}+\frac{\varepsilon}{2} = \varepsilon.
    \end{align*}
    Thus, the density of $\NA_\pi(X\ptensor Y)$ is proved.
\end{proof}

Now we will obtain some consequences of the above theorem. 

\begin{coro}\label{cor:denseness} $\NA_\pi(X\pten Y)$ is dense in $X\pten Y$ in the following cases:
\begin{enumerate}[a)]
\item $X=L_1(\mu)$ for some measure $\mu$, and $Y$ is any Banach space.
\item $X^*=L_1(\mu)$ and $Y$ is $1$-complemented in $Y^{**}$.
\item $X$ has the metric $\pi$-property and $Y$ is a dual Banach space with the RNP.
\item $X$ has the metric $\pi$-property, $Y$ is $1$-complemented in $Y^{**}$ and $Y^{**}$ has the RNP.
\end{enumerate}
\end{coro}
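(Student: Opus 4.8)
The plan is to derive all four cases from Theorem~\ref{th:general}, whose hypothesis asks us to approximate finite families in $X$ by elements of a finite-dimensional $1$-complemented subspace $Z$ for which the density $\overline{\NA_\pi(Z\ptensor Y)}=Z\ptensor Y$ is already known. Cases (b) and (d) follow from (a) and (c) respectively by a direct application of Lemma~\ref{lema:complementeddensity}: if $Y$ is $1$-complemented in $Y^{**}$, one applies the case in question to the pair $(X,Y^{**})$ (noting $L_1(\mu)=(L_1(\mu))$ and that $Y^{**}$ being a dual space with the RNP covers (d) once $Y^{**}$ has the RNP), and then restricts back to the $1$-complemented subspace $Y\subset Y^{**}$. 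So the real content is in (a) and (c).

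For case (c), $X$ has the metric $\pi$-property, so by the equivalent formulation recalled in Section~2 (from \cite{DJRRZ}), given $\varepsilon>0$ and $x_1,\dots,x_n\in X$ we can find a finite-dimensional $1$-complemented subspace $Z\subseteq X$ containing points $x_i'$ with $\|x_i-x_i'\|<\varepsilon$. Since $Z$ is finite-dimensional it is reflexive, hence a dual space with the RNP; as $Y$ is also a dual space with the RNP, \cite[Corollary~4.6]{DGLJRZ} gives that $\NA_\pi(Z\ptensor Y)$ is dense in $Z\ptensor Y$ (for this one needs the AP on one of $Z^*$ or $Y^*$, and $Z$ being finite-dimensional has the AP). Thus the hypothesis of Theorem~\ref{th:general} is verified, and (c) follows.

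For case (a), the key point is that $L_1(\mu)$ enjoys a strong finite-dimensional decomposition property: every finite family in $L_1(\mu)$ can be approximated by elements of a subspace isometric to $\ell_1^m$ which is $1$-complemented in $L_1(\mu)$ (the standard argument via simple functions on a finite measurable partition, together with the norm-one conditional expectation projection onto the span of the indicator functions of the partition pieces). For such $Z\cong\ell_1^m$ one has $\NA_\pi(Z\ptensor Y)=Z\ptensor Y$ for \emph{every} Banach space $Y$ by \cite[Proposition~3.6]{DJRRZ}, so in particular $\overline{\NA_\pi(Z\ptensor Y)}=Z\ptensor Y$, and again Theorem~\ref{th:general} applies. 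The main obstacle I expect is the verification of this approximation-by-$1$-complemented-$\ell_1^m$ property for general (not necessarily $\sigma$-finite or finite) $\mu$: one must be a little careful that finitely many $L_1$ functions are supported, up to $\varepsilon$, on a set of finite measure and are well approximated by simple functions adapted to a finite partition, so that the relevant conditional expectation operator is genuinely a norm-one projection onto an isometric copy of $\ell_1^m$. Once that routine measure-theoretic point is in place, all four cases are immediate consequences of Theorem~\ref{th:general} and Lemma~\ref{lema:complementeddensity}.
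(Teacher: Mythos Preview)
Your treatment of cases (a), (c), and (d) matches the paper's proof essentially verbatim: (a) via the $\ell_1^m$-subspace structure of $L_1(\mu)$ and \cite[Proposition~3.6]{DJRRZ}, (c) via the metric $\pi$-property and \cite[Corollary~4.6]{DGLJRZ}, and (d) from (c) by applying Lemma~\ref{lema:complementeddensity} to the $1$-complemented inclusion $Y\hookrightarrow Y^{**}$.

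Case (b), however, has a genuine gap. You claim that (b) follows from (a) via Lemma~\ref{lema:complementeddensity}, but this cannot work: in (b) the hypothesis is $X^*=L_1(\mu)$, i.e.\ $X$ is an $L_1$-\emph{predual} (such as a $C(K)$ space), not $L_1(\mu)$ itself. Lemma~\ref{lema:complementeddensity} only lets you pass to a $1$-complemented subspace on the $Y$ side; it does nothing to change $X$, and case~(a) simply does not apply to $L_1$-preduals. Your parenthetical ``noting $L_1(\mu)=(L_1(\mu))$'' does not repair this; the two classes of spaces are disjoint apart from finite-dimensional cases.

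The paper handles (b) by a separate application of Theorem~\ref{th:general}, using the structural fact that any $L_1$-predual $X$ contains, for every finite set and every $\varepsilon>0$, a $1$-complemented isometric copy $Z\cong\ell_\infty^m$ approximating that set (cf.\ \cite[Example~4.12.c)]{DJRRZ}). Since $B_Z$ is then a polytope, \cite[Theorem~4.1]{DGLJRZ} (together with Lemma~\ref{lema:complemented} applied to $Y\subset Y^{**}$, or equivalently the paper's own Corollary~3.2) gives $\NA_\pi(Z\pten Y)=Z\pten Y$, and Theorem~\ref{th:general} finishes the job. This $\ell_\infty^m$ step is the missing ingredient in your plan.
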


\begin{proof}
a) Given $f_1,\ldots, f_n\in L_1(\mu)$ and $\varepsilon>0$, we can find disjoint measurable sets $A_1, \ldots, A_m$ and numbers $a_{ij}\in \mathbb R$ such that the simple functions $f_i'=\sum_{j=1}^m a_{ij} \chi_{A_{j}}$ satisfy $\norm{f_i-f_i'}<\varepsilon$ for each $i$, and then the space $Z=\operatorname{span}\{\chi_{A_j}: 1\leq j\leq m\}$ is $1$-complemented in $L_1(\mu)$ (see Example 4.12.b) in \cite{DJRRZ}). Notice that $Z=\ell_1^m$ isometrically. Thus $\NA_\pi(Z\pten Y)=Z\pten Y$ by \cite[Proposition~3.6]{DJRRZ}. Now, the result follows from Theorem \ref{th:general}. 

b) Given $x_1,\ldots, x_n\in X$ and $\varepsilon>0$, we can find a finite-dimensional subspace $Z\subset X$ with $Z=\ell_\infty^n$ isometrically (and hence $1$-complemented in $X$) and vectors $x_i'\in Z$ with $\norm{x_i-x_i'}<\varepsilon$ \cite[Example 4.12.c)]{DJRRZ}. Then $B_Z$ is a polytope and so $\NA_\pi(Z\pten Y)=Z\pten Y$ \cite[Theorem 4.1]{DGLJRZ}, so Theorem~\ref{th:general} applies.    

c) Corollary 4.6 in \cite{DGLJRZ} shows that $\overline{\NA_\pi(Z\pten Y)}=Z\pten Y$ for any finite-dimensional space $Z$. Thus we may apply Theorem \ref{th:general}. 

d) Follows by a combination of c) and Lemma~\ref{lema:complementeddensity}.
\end{proof}

\begin{rema}
Observe that the assumptions on $Y$ considered in c)  of Corollary~\ref{cor:denseness} do not imply the ones in d). For instance, taking $Y=\ell_1$, it follows that $Y$ is a dual Banach space with the RNP  
but its bidual $\ell_1^{**}=\ell_\infty^*$ fails the RNP, for instance, since $\ell_\infty$ contains an isometric copy of $\ell_1$.

On the other hand, we have not found any explicit example of Banach space $X$ such that $X^{**}$ has the RNP, that there exists a bounded linear projection $P\colon X^{**}\longrightarrow X$ with $\Vert P\Vert\leq 1$ and such that $X$ fails to be isometrically a dual Banach space.

Observe that such example, in case it exists, should satisfy that both the construction of the space and the proof of the requirements must be involved, essentially because the space $X$ should satisfy the usual properties whose failure are the common test to prove that a given Banach space is not a dual space (i.e. $1$-complementability in the bidual and the well known result that separable dual spaces have RNP).

Indeed, observe that if we remove the condition that $\Vert P\Vert\leq 1$ then such example does exist: in \cite[Theorem 1.1]{broito} it is proved that there exists an equivalent norm $|||\cdot|||$ on the predual of the James space $J$,  
such that $(J,|||\cdot|||)$ fails to be isometric to any dual Banach space, whereas the complementability in its bidual and the RNP on its bidual survive because both conditions are stable by taking equivalent renormings. The authors thank Gin\'es L\'opez-P\'erez for pointing them this result.

The above makes us think that such space $X$ should exist and, consequently, the assumptions made on d) should be independent of that of c).

\end{rema}

\section*{Acknowledgements}  

The authors are grateful to Audrey Fovelle, Gin\'es L\'opez-P\'erez and Miguel Mart\'in for useful discussions on the topic of the paper.

Part of the results were obtained during the visit of the last author to Universidad de Zaragoza in April 2024. He would like to express his gratitude to Departamento de An\'alisis Matem\'atico of UNIZAR and to IUMA for the hospitality received during the stay.

The research of Luis C. García-Lirola was supported by \\
MCIN/AEI/10.13039/501100011033: grants PID2021-122126NB-C31 and \\PID2022-137294NB-I00, by DGA: grant E48-20R and by Generalitat Valenciana: grant CIGE/2022/97.

The research of Abraham Rueda Zoca was supported by \\
MCIN/AEI/10.13039/501100011033: grant PID2021-122126NB-C31, Junta de Andaluc\'ia: grant FQM-0185, by Fundaci\'on S\'eneca: ACyT Regi\'on de Murcia: grant 21955/PI/22 and by Generalitat Valenciana: grant \\ CIGE/2022/97.

\end{document}